\newtheorem{thm}{Theorem}
\newtheorem{cor}{Corollary}[section]
\newtheorem{lem}{Lemma}[section]
\newtheorem{prop}{Proposition}[section]
\theoremstyle{remark}
\newtheorem{rmk}{Remark}[section]
\theoremstyle{definition}
\numberwithin{equation}{section}
\def\p{\partial}
\def\R{\mathbb{R}}
\def\N{\mathbb{N}}
\def\a{\alpha}
\def\b{\beta}
\def\c{\gamma}
\def\D{\Delta}
\def \n {\nabla}
\begin{document}

\title[Harmonic heat flow on almost Hermitian manifolds]{The harmonic heat flow of almost complex structures}

\author{Weiyong He, Bo Li}

\address{Department of Mathematics, University of Oregon, Eugene, Oregon, 97403}
\email{whe@uoregon.edu}
\email{bol@uoregon.edu}

\begin{abstract}We define and study the harmonic heat flow for almost complex structures which are compatible with a Riemannian structure $(M, g)$. This is a tensor-valued version of harmonic map heat flow. We prove that if the initial almost complex structure $J$ has small energy (depending on the norm $|\nabla J|$), then the flow exists for all time and converges to a K\"ahler structure. We also prove that there is a finite time singularity if the initial energy is sufficiently small but there is no K\"ahler structure in the homotopy class. A main technical tool is a version of monotonicity formula, similar as in the theory of the harmonic map heat flow. 
We also construct an almost complex structure on a flat four tori with small energy such that the harmonic heat flow blows up at finite time with such an initial data.
\end{abstract}

\maketitle

\section{Introduction}

Almost complex manifolds contain well-studied objects in the modern theory of differential geometry, such as complex manifolds, symplectic manifolds and K\"ahler manifolds.  An almost complex structure supports compatible Riemannian metrics, called almost Hermitian structures. The study of general almost Hermitian manifolds has a rich history.

In this paper we study \emph{harmonic heat flow} on an almost Hermitian manifold $(M, g, J)$. Consider the metric $g$ is fixed and we look for a ``best" almost complex structure which are compatible with the metric. This problem dates back to Calabi-Gluck \cite{CG} and C. Wood \cite{Wood1, Wood2} in 1990s using the theory of twistor bundles. In particular, C. Wood \cite{Wood1} came up with the notion of the \emph{harmonic almost complex/Hermitian structure} by considering minimizing the energy functional, for all compatible almost complex structures, 
\begin{equation}
E(J)=\int_M |\nabla J|^2 dv.\end{equation}
The Euler-Lagrangian equation  reads
\[
[J, \Delta_g J]=0,
\]where $\Delta_g$ is the rough Laplacian of $g$. 

Clearly a K\"ahler structure $\nabla J=0$ gives an absolute minimizer of the energy functional. But there are  various absolute minimizers of the energy functional which are not K\"ahler, see for example \cite{BorSalvai}. In this sense one can view the energy-minimizing harmonic almost Hermitian structures as a natural generalization of K\"ahler structures. The harmonic almost complex structure has gained considerate interest and we refer the readers to the recent survey paper \cite{Davidov} for the background, history and results in this subject.

A straightforward computation shows that  the above Euler-Lagrangian equation is equivalent to the following equation \cite{He17},
\begin{equation}\label{E2}
\Delta_g J-J\nabla_p J\nabla_p J=0,
\end{equation}
where $J\nabla_p J\nabla_p J$ reads in local coordinates
\[(J\nabla_p J\nabla_p J)^k_j=g^{pq}J_j^a\nabla_pJ_a^b \nabla_q J_b^k.\]
Recently the first named author \cite{He17} has studied the regularity theory of weakly harmonic almost complex structures and has proved many results parallel to profound regularity theory of harmonic maps. The regularity theory of harmonic maps has a very rich history in differential geometry and has played a very central role in regularity theory of geometric analysis, with tremendous fascinating results and applications in literature. The harmonic map heat flow, first studied by Eells-Sampson \cite{ES} in 1960s, has been a very effective tool to construct the harmonic maps in a fixed homotopy class and has been studied extensively ever since.  
 
 In this paper we study the following \emph{harmonic heat flow} of an almost complex structure,
\begin{equation}\label{E3}
\p_t J=\Delta_g J-J\nabla_p J\nabla_p J
\end{equation}
This is a tensor-valued version of harmonic map heat equation. The short time existence follows from rather standard theory since \eqref{E3} is a parabolic system. We also derive Shi-type estimate of this equation, which shows that the flow can be extended once $|\nabla J|$ remains bounded. These results appear in Section 2. We study long time behavior and finite time singularities in Section 3. We summarize our main results as follows,

\begin{thm}\label{main1}Let $(M, g, J_0)$ be an almost Hermitian manifold such that $|\nabla J_0|\leq K$ for a positive constant $K$. Then there exists $\epsilon=\epsilon(K, g)$ such that if the energy $E(J_0)<\epsilon$, then the harmonic heat flow \eqref{E3} exists for all time and converges smoothly to a K\"ahler structure by subsequence. 
\end{thm}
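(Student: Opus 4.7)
The plan is to follow the small-energy template for geometric heat flows, in the spirit of Struwe's analysis of the harmonic map heat flow, using the monotonicity formula announced in the abstract as the core analytic tool. Since \eqref{E3} is the $L^2$-gradient flow of $E$, the energy identity
\[
E(J(t)) + 2\int_0^t\!\!\int_M |\partial_s J|^2\,dv\,ds = E(J_0)
\]
already yields $E(J(t)) \le E(J_0) < \epsilon$ throughout the existence interval. Combined with the Shi-type estimate from Section 2, which ensures that the flow persists as long as $|\nabla J|_{L^\infty}$ remains bounded, the whole problem reduces to upgrading the smallness of $E$ to a uniform-in-time bound on $|\nabla J|$ that depends only on $K$ and $g$.

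The decisive ingredient will be a parabolic $\epsilon$-regularity statement: there exist $\epsilon_0 = \epsilon_0(g)$ and $C_0 = C_0(g)$ such that if the heat-kernel weighted energy on a backward parabolic cylinder $P_r(x_0,t_0)$ is below $\epsilon_0$, then $r^2|\nabla J|^2 \le C_0$ on $P_{r/2}(x_0,t_0)$. To obtain it, I would differentiate \eqref{E3} and use the algebraic constraints $J^2 = -I$ and $\nabla_p J\cdot J + J\cdot\nabla_p J = 0$ to derive a parabolic Bochner-type inequality of the schematic form
\[
(\partial_t - \Delta)|\nabla J|^2 \le -2|\nabla^2 J|^2 + C_1(g)\,|\nabla J|^4 + C_2(g)\,|\nabla J|^2,
\]
and then pair it with the monotonicity formula and a Moser iteration to tame the critical quartic term. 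Choosing $\epsilon < \epsilon_0$ and $r$ a fixed fraction of the injectivity radius, this produces a uniform bound $|\nabla J|_{L^\infty} \le C(K,g)$ independent of $t$. The Shi estimates then bootstrap to uniform $C^k$ bounds for every $k$, so the flow exists on $[0,\infty)$ with $C^\infty$-precompact orbit.

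For the long-time limit, the energy identity forces $\int_0^\infty\!\!\int_M |\partial_t J|^2\,dv\,dt < \infty$, so there exist $t_k \to \infty$ with $\|\partial_t J(t_k)\|_{L^2} \to 0$; smooth compactness then extracts a subsequence $J(t_k) \to J_\infty$ in $C^\infty$, and $J_\infty$ solves $\Delta J_\infty = J_\infty \nabla_p J_\infty\nabla_p J_\infty$ with $E(J_\infty) < \epsilon$. To conclude $\nabla J_\infty \equiv 0$ I would invoke a small-energy gap: applying the stationary version of $\epsilon$-regularity to $J_\infty$ gives $\|\nabla J_\infty\|_{L^\infty}^2 \le C E(J_\infty)/r_0^2$ on a fixed scale $r_0 = r_0(g)$, and feeding this pointwise bound back into the integrated elliptic Bochner identity on the compact manifold $M$ closes up to force $\nabla J_\infty \equiv 0$ once $\epsilon$ is small enough relative to $g$.

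The hardest step will be the $\epsilon$-regularity estimate itself: the nonlinearity $J\nabla_p J\nabla_p J$ is tensor-valued and algebraically constrained, so the monotonicity formula and the subsequent Moser iteration must be set up to respect $J^2 = -I$ and to absorb the curvature and torsion contributions of $g$, rather than being transplanted directly from the harmonic map setting. Once this regularity/gap pair is in place, the continuation argument, the smooth subsequential convergence, and the final Liouville-type step are all essentially routine.
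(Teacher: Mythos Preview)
Your overall strategy is sound, but the paper organizes the core step differently, and there is one point in your outline that needs tightening. Instead of applying $\epsilon$-regularity at every spacetime point to manufacture a uniform gradient bound, the paper argues by contradiction: assuming $\sup_{[0,T)}\max_M |\nabla J|^2=\infty$, it rescales parabolically at a sequence of near-maximal points, uses Moser's Harnack inequality on the rescaled equation to obtain a definite \emph{lower} bound on the local weighted energy, and then invokes the monotonicity formula~\eqref{z1} to bound the same quantity from \emph{above} by $CN^{n/2}\sqrt{\epsilon}+C/\ln^2 N$. Optimizing over $N$ forces $T^{(n-2)/2}\le C\sqrt{\epsilon}$, which clashes with the short-time existence bound $T\ge\delta\arctan(1/2K^2)$ coming from Lemma~\ref{lemma2} once $\epsilon=\epsilon(K,g)$ is small enough. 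So in the paper $K$ enters \emph{only} through this lower bound on the existence time, and the $\epsilon$-regularity theorem (Theorem~\ref{eps}) is not used at all in the proof of Theorem~\ref{main1}.

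Your direct route can also be made to work, but the sentence ``choosing $r$ a fixed fraction of the injectivity radius'' hides a genuine gap. Both the monotonicity formula and the $\epsilon$-regularity built on it require a backward time window of size $\sim r^2$, so they say nothing until $t\gtrsim r^2$; for earlier times you must fall back on the short-time estimate from $|\nabla J_0|\le K$, which by Lemma~\ref{lemma2} controls $|\nabla J|$ only up to $\tau_0(K)\sim K^{-2}$. If $r$ is chosen independently of $K$ and $K$ is large, the interval $[\tau_0(K),4r^2]$ is uncovered. The fix is to take $r=r(K)\sim\sqrt{\tau_0(K)}$; at that scale one has $\Psi(r)\lesssim r^{2-n}E_0$, and requiring this to lie below the regularity threshold is exactly what forces $\epsilon=\epsilon(K,g)$. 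For the final gap step your Bochner-on-$M$ plan is acceptable; the paper's Lemma~\ref{lemma3} instead argues by compactness (pass to a K\"ahler limit $J_0$, then show $\int_M|\nabla(J_k-J_0)|^2=0$ via a one-line integration by parts), but either method closes the proof.
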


We also derive a general theorem about finite time singularities,

\begin{thm}\label{main2}Let $(M, g, J_0)$ be an almost Hermitian manifold. Suppose  in the homotopy class of $[J_0]$ there exists no K\"ahler structure but \[\inf_{J\in [J_0]} E(J)=0.\] Then there exists an $\epsilon>0$ such that for any initial almost complex structure $J\in [J_0]$ with energy $E(J)<\epsilon$,  the harmonic heat flow \eqref{E3} develops a finite time singularity at $T<\infty$. In particular $T\rightarrow 0$ if $\epsilon\rightarrow 0$. 
\end{thm}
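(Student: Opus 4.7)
The plan is to argue by contradiction against Theorem \ref{main1}. The key additional ingredient beyond what is stated in the excerpt is a parabolic $\epsilon$-regularity statement that I expect to follow from the monotonicity formula alluded to in the abstract, in the standard Struwe / Chen--Struwe / Lin--Wang fashion: there should exist constants $\epsilon_0 = \epsilon_0(g) > 0$ and $C = C(g) > 0$ such that whenever the parabolic energy on the cylinder $B_r(x) \times (t - r^2, t]$ is below $\epsilon_0$, one has $|\nabla J(x, t)|^2 \leq C/r^2$.

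With this in hand, the argument runs as follows. First, because \eqref{E3} is the negative gradient flow of $E$, energy is nonincreasing along the flow, so $E(J(t)) < \epsilon$ throughout the lifespan. Fix any $r_0 > 0$ small, set $K := \sqrt{C}/r_0$, and let $\epsilon_1 = \epsilon_1(K, g)$ be the constant from Theorem \ref{main1}; take $\epsilon := \min(\epsilon_0, \epsilon_1)$. Suppose, toward a contradiction, that the flow exists smoothly on $[0, r_0^2]$. Since the local parabolic energy is bounded above by the total energy, which is at most $\epsilon < \epsilon_0$, the $\epsilon$-regularity statement yields $|\nabla J(\cdot, r_0^2)| \leq K$, while $E(J(r_0^2)) < \epsilon \leq \epsilon_1$. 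Applying Theorem \ref{main1} with initial datum $J(r_0^2)$ then produces a subsequential smooth K\"ahler limit $J_\infty$. Because the smooth flow preserves homotopy class, $J_\infty \in [J(r_0^2)] = [J_0]$, contradicting the hypothesis that $[J_0]$ admits no K\"ahler structure. Hence the flow must singularize before time $r_0^2$, i.e., $T < r_0^2$. Since $r_0$ is arbitrary (with $\epsilon$ shrinking correspondingly as $r_0 \to 0$, because $K = \sqrt{C}/r_0 \to \infty$ and $\epsilon_1(K,g)$ typically decreases in $K$), this also gives $T(\epsilon) \to 0$ as $\epsilon \to 0$.

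The main obstacle is establishing the $\epsilon$-regularity statement in the present tensor-valued setting. Once the monotonicity formula of Section 2 is in place, one extracts from it a Morrey-type bound on $\int |\nabla J|^2$ over small parabolic cylinders, and standard parabolic iteration (using the Shi-type estimates already derived in Section 2 as an extension criterion) should upgrade this to the pointwise gradient bound above, in close parallel with the harmonic-map-heat-flow template. The remaining steps---energy monotonicity, homotopy invariance under the smooth flow, and the application of Theorem \ref{main1}---are then essentially formal.
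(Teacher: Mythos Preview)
Your argument is essentially correct, but it is organized differently from the paper's own proof. The paper does not invoke Theorem~\ref{main1} as a black box. Instead it recycles the internal dichotomy from the \emph{proof} of Theorem~\ref{main1}: if $\sup_{t<T}\bar e(t)<\infty$ then the flow exists for all time and subconverges smoothly to a harmonic almost complex structure which, by Lemma~\ref{lemma3} and the small-energy hypothesis, is K\"ahler and lies in $[J_0]$---contradiction. Hence $\sup_{t<T}\bar e(t)=\infty$, and the blow-up analysis carried out verbatim in the proof of Theorem~\ref{main1} (rescaling at a sequence of energy-density maxima, Moser--Harnack, and the monotonicity formula \eqref{z1}) yields directly the quantitative bound \eqref{T}, namely $T^{(n-2)/2}\le C_1\sqrt{\epsilon}$. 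No appeal to the $\epsilon$-regularity Theorem~\ref{eps} is needed at this stage, and the initial gradient bound $|\nabla J_0|\le K$ from Theorem~\ref{main1} plays no role here (in that proof it was used only at the very last line, to contradict \eqref{T} via Corollary~3.1).

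Your route---waiting to time $r_0^2$, applying $\epsilon$-regularity to manufacture a uniform bound $|\nabla J(\cdot,r_0^2)|\le K$, and then feeding this into Theorem~\ref{main1}---is more modular and also works, but one step is stated imprecisely. The claim ``the local parabolic energy is bounded above by the total energy'' is not literally true in dimension $n>2$: the scale-invariant quantity (the paper's $\Psi(R)$, or equivalently $R^{2-n}$ times the spatial energy on a time slice) satisfies $\Psi(R)\le CR^{2-n}E_0$, which is much larger than $E_0$ for small $R$. To make your argument go through you must also require $\epsilon\le c\,r_0^{\,n-2}\epsilon_0$ so that $\Psi(r_0)<\epsilon_0$; this is compatible with your choice $\epsilon\le\epsilon_1(K,g)$ once one tracks that $\epsilon_1(K,g)\sim K^{-2(n-2)}\sim r_0^{2(n-2)}$ from \eqref{T} and Corollary~3.1. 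With that correction both approaches recover the same rate $T\lesssim\epsilon^{1/(n-2)}$; the paper's version simply reads it off from \eqref{T} in one line.
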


S. Donaldson \cite{Don} constructed a homotopy class of almost complex structures on a $K3$ surface which contains no complex structure in the homotopy class. Inspired by his example, we can construct an almost complex structure on a flat four-tori with arbitrary small energy but its homotopy class contains no K\"ahlerian complex structure which is compatible with the flat metric. 
 This produces an example of finite time singularity using Theorem \ref{main2}. 
These results are parallel to results of Chen-Ding \cite{CD}, built upon the work of Struwe \cite{S} and Chen-Struwe \cite{CS}. A major technical tool is  a version of monotonicity formula and  a version of $\epsilon$-regularity for the harmonic heat flow \eqref{E3}. We should emphasize that even though our methods are similar to those used for harmonic map heat flow, the tensor-valued version does require more careful analysis. In particular, the background geometry of $(M, g)$ gets involved in a significant way and it does pose extra difficulties that need to be taken care of. We can mention two examples. The first one is that we do not have a parallel theory as in Eells-Sampson \cite{ES}. Our ``target manifold" is fiber bundle modeled on $SO(2n)/U(n)$, which always have positive curvature as a symmetric space (the fiber space) and hence we do not have the parallel results as in \cite{ES}. Another example is that the lower order terms coming from curvature do have significant effects. 
In particular, our monotonicity formula behaves differently and it is more complicated than that in the harmonic map heat flow, due to its tensor-valued nature. This behavior requires extra care when we prove both Theorem \ref{main1} and Theorem \ref{main2}; compare \cite[Lemma 2.2]{CD} (due to Chen-Struwe \cite{CS}) and Theorem 3.2. 
Nevertheless, it is fair to say that we have a rather parallel theory  as in harmonic map heat flow, even though technically there are substantial differences. 
The theory of harmonic maps and harmonic map heat flow is a vast subject with hundreds (maybe thousands) of papers. We hope our study of harmonic heat flow for almost complex structures is just a start of a fruitful journey. \\

{\bf Acknowledgement:} The first named author is partly supported by an NSF grant, award no. 1611797. The second named author is supported in part by China Scholarship Council.

\numberwithin{equation}{section}
\numberwithin{thm}{section}

\section{The harmonic heat flow of an almost complex structure}
Let $(M, g, J_0)$ be an almost Hermitian structure. We consider the harmonic heat flow \eqref{E3} of an almost complex structure, with the initial condition $J(0)=J_0$. In this section, we prove the short time existence of the flow, and then derive some estimates along the flow.

\subsection{Short time existence}
\begin{thm}For any smooth initial $J_0$, there exists a unique smooth short time solution of \eqref{E3} such that $(g, J(t))$ defines a compatible almost Hermitian structures.
\end{thm}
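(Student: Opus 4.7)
The plan is to split the proof into two steps: (i) short-time existence and uniqueness of a smooth solution of \eqref{E3} viewed as a semilinear parabolic system for sections of $\mathrm{End}(TM)$, ignoring for the moment the constraints that $J$ be almost complex and $g$-compatible; and (ii) verification that these constraints are preserved along the flow, via uniqueness for a pair of homogeneous linear parabolic equations satisfied by the ``defect tensors''.

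For step (i), I would note that \eqref{E3} is a semilinear parabolic equation whose principal part is the rough Laplacian $\Delta_g$ on the bundle $\mathrm{End}(TM)$; this leading operator is strictly elliptic, and the nonlinearity $-J\nabla_p J\nabla_p J$ is smooth in $J$ and quadratic in $\nabla J$. Linearizing at $J_0$ and running the standard contraction-mapping argument in the parabolic H\"older space $C^{2+\a,1+\a/2}(M\times[0,T])$ for sufficiently small $T$ (or, equivalently, appealing to the Ladyzhenskaya--Solonnikov--Ural'tseva theory of quasilinear parabolic systems) produces a unique short-time solution $J(t)$ with $J(0)=J_0$. Bootstrapping via Schauder estimates and differentiating the equation in $t$ then yields joint smoothness in space and time.

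For step (ii), I would introduce the defect tensors $K:=J^{2}+I$ and $B(X,Y):=g(JX,JY)-g(X,Y)$, which measure the failure of $(g,J(t))$ to be almost Hermitian, and aim to show each satisfies a homogeneous linear parabolic PDE with vanishing initial data. Differentiating $K$ in $t$ and using $\Delta_g(J^{2})=(\Delta_g J)J+J(\Delta_g J)+2\nabla_p J\,\nabla_p J$ together with \eqref{E3} gives
\[
\partial_t K=\Delta_g K-2\nabla_p J\,\nabla_p J-(J\nabla_p J\nabla_p J)J-J(J\nabla_p J\nabla_p J).
\]
The key manipulation is to use the pointwise identity $\nabla_p K=(\nabla_p J)J+J(\nabla_p J)$ to rewrite every occurrence of $(\nabla_p J)J$ in terms of $\nabla_p K$ and $J(\nabla_p J)$; the algebraic form of the nonlinearity in \eqref{E3} is tuned precisely so that the $K$-independent pieces cancel, leaving a homogeneous linear parabolic equation in $K$ and $\nabla K$ with coefficients built from $J$ and $\nabla J$. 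An entirely analogous computation for $B$ yields a similar linear parabolic equation. Since $K(0)=0$ and $B(0)=0$, standard uniqueness for such linear equations (via an $L^{2}$ energy estimate applied componentwise) forces $K\equiv0$ and $B\equiv0$, so $(g,J(t))$ remains almost Hermitian throughout the existence interval.

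The main obstacle I would expect is the algebra in step (ii), especially for the $g$-compatibility defect $B$, where both indices of $J$ and the metric appear and the cancellations must be tracked carefully. Conceptually, however, the nonlinearity in \eqref{E3} is exactly the correction that keeps the naive heat flow $\partial_t J=\Delta_g J$ tangent to the submanifold of $g$-compatible almost complex structures inside $\Gamma(\mathrm{End}(TM))$, so one should view \eqref{E3} as the constrained gradient flow of $E$ on that submanifold; this is a reassuring sanity check that the defect tensors must indeed satisfy homogeneous linear PDEs with vanishing initial data.
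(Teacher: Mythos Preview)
Your proposal is correct and follows essentially the same route as the paper: obtain short-time existence from standard semilinear parabolic theory for sections of $\mathrm{End}(TM)$, then show the defect tensors $K=J^{2}+I$ and $B=g(J\cdot,J\cdot)-g$ satisfy homogeneous linear parabolic equations with zero initial data. The only minor differences are that the paper closes with the pointwise maximum principle on $|K|^{2}$ and $|B|^{2}$ rather than an $L^{2}$ energy estimate, and it explicitly establishes $K\equiv 0$ first so that the identity $J\nabla J+\nabla J\,J=0$ is available when rewriting the equation for $B$; you should be aware of this ordering when you carry out the ``analogous computation'' for the metric-compatibility defect.
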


\begin{proof}First we suppose $J_0$ is a smooth section of $\text{End}(TM)$ which does not have to be a compatible almost complex structure. Then the \eqref{E3} is a semilinear elliptic equation for $J_0$. The standard parabolic equation implies that there exists a unique smooth short time solution. Then we only need to argue that if $J_0$ is a compatible almost complex structure, then $J(t)$ remains to be a compatible almost complex structure. Namely we need to show that, along the flow, 
\[
J^2=-id, g(J, J)=g
\] 
Denote $A=g(J, J)-g$ and $B=J^2+id$.   We will show that on every  closed interval where the smooth solution $ J(t) $ exists, there is a constant $ c $ such that 
\begin{equation}
\begin{split}
\p_t |A|^2 \le \D |A|^2+c|A|^2 \\
\p_t |B|^2 \le \D |B|^2+c|B|^2
\end{split}
\end{equation}
Then since $ |A(0)|^2=|B(0)|^2=0 $, by maximal principle $ A $ and $ B $ are both zero along the flow. 

By $ \n B =J\n J +\n J J $, we compute 
\begin{equation}
\begin{split}
\p_t B & =J(\D J-J\n_p J \n_p J )+ (\D J-J\n_p J \n_p J )J\\
& = \D B -2 \n_p J \n_p J -J^2 \n_p J \n_p J- J\n_p J \n_p J J \\
&= \D B -2 B \n_p J \n_p J -J\n_p J \n_p B + J \n_p B \n_p J.
\end{split}
\end{equation}
 Then we have  \[ \p_t |B|^2 =\D |B|^2-2|\n B|^2+2\big( B,-2 B \n_p J \n_p J -J\n_p J \n_p B + J \n_p B \n_p J \big), \] 
On a closed interval, $ J $ and $ \n J $ are bounded, so that we have
\begin{equation}
\begin{split}
\p_t |B|^2 & \le \D |B|^2-2|\n B|^2 + c_1|B|(|B|+|\n B|)\\  
& \le \D |B|^2+c|B|^2.
\end{split}
\end{equation}
Hence $ B=0 $ follows.\\

Now we compute
\begin{equation}\label{A1}
\p_t A=\D A -2g(\n_p J,\n_p J)-g(J\n_p J \n_p J ,J)-g(J,J\n_p J \n_p J).
\end{equation}
Since $ \n A=g(\n J,J)+g(J,\n J) $, and  $ J\n J+\n J J=0 $ by $ B=0 $, we see that
\begin{equation}\label{A2}
\begin{split}
\n_p A(J\n_p J, \quad) &=g(\n_p J J \n_p J,J)+ g(J ^2 \n_p J, \n_p J)\\
& = -g(J\n_p J \n_p J ,J)-g(\n_p J,\n_p J).
\end{split}
\end{equation}
Similarly, 
\begin{equation}\label{A3}
\n_p A(\quad, J\n_p J)=-g(J,J\n_p J \n_p J )-g(\n_p J,\n_p J).
\end{equation}
So we can rewrite \eqref{A1} as
\begin{equation}
\p_t A =\D A + \n_p A(J\n_p J, \quad)+\n_p A(\quad, J\n_p J).
\end{equation}
On a closed interval, we then have
\begin{equation}
\begin{split}
\p_t |A|^2 & = \D |A|^2- 2 |\n A|^2+2\big( A, \n_p A(J\n_p J, \quad)+\n_p A(\quad, J\n_p J)   \big) \\
& \le  \D |A|^2- 2 |\n A|^2+ c_1 |A||\n A| \\
& \le  \D |A|^2 + c|A|^2.
\end{split}
\end{equation}
Hence $ A=0 $.

\end{proof}

\subsection{Evolution equation and Shi-type estimate}

The following Shi-type estimate on higher derivatives of $J$ holds. 
\begin{prop} Suppose that $K>0$ and the harmonic heat flow exists in $[0,  K^{-1}]$. For each $m\in \N_{\ge 2}$, there exists a constant $C_m$ depending only on $ K,n $ and the metric $ g $ such that if $|\nabla J|(x, t)\leq K$ on $M\times [0, K^{-1}]$, then for all $t\in [0,  K^{-1}]$, we have the estimate
\[
|\nabla^{m}J|\leq C_m t^{-(m-1)/2} . 
\]
\end{prop}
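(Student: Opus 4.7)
The plan is to run the classical Shi--Bernstein parabolic maximum-principle argument, inducting on $m$ starting at $m=2$, with auxiliary functions of the form
\[
 f_m = t^{m-1}|\nabla^m J|^2 + A\, t^{m-2}|\nabla^{m-1}J|^2,
\]
where $A=A(K,n,g,m)$ is chosen large enough that the good term $-2A\,t^{m-2}|\nabla^m J|^2$ coming from the Bochner identity applied to $|\nabla^{m-1}J|^2$ absorbs all otherwise uncontrolled $|\nabla^m J|^2$ contributions produced when one differentiates $t^{m-1}|\nabla^m J|^2$ in $t$.

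First I would derive the evolution equation for $\nabla^m J$. Differentiating \eqref{E3} $m$ times, expanding $\nabla^m(J\nabla_p J\nabla_p J)$ by Leibniz, and commuting $\nabla^m$ past $\Delta$ via Bochner--Weitzenb\"ock (which generates terms involving $\nabla^k \mathrm{Rm}$ with $k\le m-2$, all bounded on the fixed $(M,g)$), one finds schematically
\[
 \partial_t \nabla^m J = \Delta\nabla^m J + \!\!\!\sum_{a+b+c=m}\!\!\! \nabla^a J \ast \nabla^{b+1}J \ast \nabla^{c+1}J + (\text{curvature terms}),
\]
and consequently
\[
 (\partial_t-\Delta)|\nabla^m J|^2 = -2|\nabla^{m+1}J|^2 + 2\bigl\langle \nabla^m J,\ \text{cubic plus curvature terms}\bigr\rangle.
\]
The only cubic products carrying a $\nabla^{m+1}J$ factor come from $a=c=0$, $b=m-1$ (and the symmetric case), contributing $|\nabla^m J|\,|\nabla^{m+1}J|\,|\nabla J|\le \tfrac12|\nabla^{m+1}J|^2+CK^2|\nabla^m J|^2$ by Young's inequality and the hypothesis $|\nabla J|\le K$. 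Every other cubic product has all factors of order $\le m$, and by the inductive hypothesis $|\nabla^k J|\le C_k t^{-(k-1)/2}$ for $k<m$ they are bounded by a polynomial in $t^{-1/2}$ whose coefficients depend only on $K$, $g$, and $m$.

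For the base case $m=2$ I apply the parabolic maximum principle to $f_2 = t|\nabla^2 J|^2 + A|\nabla J|^2$. With $A$ sufficiently large (depending on $K$ and $g$), the above estimates yield $(\partial_t-\Delta)f_2 \le C(K,g)$, so at a space-time maximum $f_2$ is bounded by its initial value plus $C(K,g)t$, giving $t|\nabla^2 J|^2\le C_2$ on $[0,K^{-1}]$. The inductive step is formally identical: given the bounds for $k<m$, the computation above delivers $(\partial_t-\Delta)f_m \le C_m'$ once $A=A(K,g,m,C_{m-1},\dots,C_1)$ is large, and the maximum principle closes the argument.

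The main obstacle is the combinatorial and analytic bookkeeping of the Leibniz expansion of $\nabla^m(J\nabla_p J\nabla_p J)$: one has to isolate every top-order $\nabla^{m+1}J$ coupling and show they can all be absorbed into the good term $-2|\nabla^{m+1}J|^2$, while ensuring that the curvature commutator corrections (which depend on covariant derivatives of the Riemann tensor of $g$ but not on $J$) never introduce products of derivatives of $J$ whose total order exceeds what the inductive hypothesis controls. Compactness of $M$ is used throughout so that the parabolic maximum principle applies cleanly on $M\times[0,K^{-1}]$, and the dependence $C_m=C_m(K,n,g)$ arises because $A$ and the constants in the cubic estimates are functions only of these quantities.
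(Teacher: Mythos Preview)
Your overall scheme---derive the evolution of $|\nabla^m J|^2$, absorb the $\nabla^{m+1}J$ couplings by Young, run the maximum principle on a $t$-weighted barrier---matches the paper's. The gap is at the base case $m=2$. In the Leibniz expansion of $\nabla^2(J\nabla_p J\nabla_p J)$ the distribution $(a,b,c)=(0,1,1)$ produces $J\ast\nabla^2 J\ast\nabla^2 J$; pairing with $\nabla^2 J$ this contributes $C|\nabla^2 J|^3$ to $(\partial_t-\Delta)|\nabla^2 J|^2$. That term carries \emph{two} top-order factors, so your clause ``by the inductive hypothesis $|\nabla^k J|\le C_k t^{-(k-1)/2}$ for $k<m$'' does not cover it, and the quadratic good term $-2A|\nabla^2 J|^2$ coming from $A|\nabla J|^2$ cannot absorb a cubic. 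One only obtains
\[
(\partial_t-\Delta)f_2\le (1+Ct-2A)|\nabla^2 J|^2+Ct\,|\nabla^2 J|^3+C,
\]
and at an interior maximum the $Ct|\nabla^2 J|^3$ term blocks any bound on $t|\nabla^2 J|^2$. (For $m\ge3$ a cubic with two factors of $\nabla^m J$ would require $a=2-m<0$, so this obstruction is peculiar to $m=2$; your inductive step there is essentially sound.)

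The paper handles $m=2$ with a different barrier $f=t^{3/2}|\nabla^2 J|+\beta t|\nabla J|^2-C_2 t$, using the \emph{first} power of $|\nabla^2 J|$. Dividing the evolution of $|\nabla^2 J|^2$ by $2|\nabla^2 J|$ lowers the cubic to $C|\nabla^2 J|^2$; the extra term $|\nabla|\nabla^2 J||^2/|\nabla^2 J|$ that appears is then controlled at the interior maximum via $\nabla f=0$, which forces $t^{1/2}|\nabla|\nabla^2 J||\le 2\beta|\nabla J|\,|\nabla^2 J|$. With $m=2$ secured, the paper's induction for $m\ge3$ uses a telescoping sum $G=t^m|\nabla^m J|^2+\sum_{k=2}^{m-1}\tfrac{\beta}{k!}t^k|\nabla^k J|^2$ rather than your two-term $f_m$, but that difference is bookkeeping rather than a new idea.
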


\begin{proof}
In this proof $ C $ represents a universal constant depending on $ K,n$, and $g$ which can vary line by line. 
First we compute $ \p_t \n^m J $:
\begin{displaymath}
\begin{split}
  \p_t \n^m J& = \n^m (\D J-J\n_p J \n_p J) \\
 & = \D \n^m J + \sum\limits_{i+j=m} \n^i Rm*\n^j J -\n^m (J\n_p J \n_p J) . 
\end{split}
\end{displaymath}
Then we compute $\p _t |\n^m J|^2$ as following: 
\begin{equation}\label{jm}
\begin{split}
\p_t |\n^m J|^2  =&\Delta |\n^m J|^2 -2| \n ^{m+1} J |^2 +  \sum\limits_{i+j=m}\n^i Rm *\n^j J* \n^m J\\
 & +\sum\limits_{i+j+k=m} \n^i J* \n^{j+1} J *\n^ {k+1}J*\n ^m J.
\end{split}
\end{equation}
Particularly, when $ m=1 $ we have 
\begin{equation}\label{eJ}
\p_t | \n J |^2  \le  \D | \n J|^2 +C\left( | \n J | + | \n J |^2 +  | \n J |^4 \right).
\end{equation}
 And for $ m=2 $ we have 
\begin{displaymath}
\begin{split}
\p_t | \n ^2 J |^2  \le \D | \n ^2 J |^2 +C( | \n ^2 J |^3+| \n ^2 J |^2+| \n ^2 J | )  .
\end{split}
\end{displaymath}
The above inequality is equivalent to  \[ 2| \n ^2 J | \p_t | \n ^2 J | \le 2| \n ^2 J | \D | \n ^2 J | + 2| \n | \n ^2 J | |^2 + C( | \n ^2 J |^3+| \n ^2 J |^2+| \n ^2 J | ), \]
 or
 \begin{equation}\label{jm2}
 \p_t | \n ^2 J |  \le \D | \n ^2 J | + \frac{| \n | \n ^2 J | |^2}{| \n ^2 J |} + C(| \n ^2 J |^2+| \n ^2 J |+1).
 \end{equation}

To prove the proposition for $ m=2 $, let \[ f=t^{3/2}| \n ^2 J|+\b t|\n J|^2 -C_2t ,\quad t\in [0,K^{-1}],  \] where $ \b  $ and $ C_2=C_2(K,n,g) $ are constants to be determined later. Then the inequality \eqref{jm2} leads to  
\begin{equation}\label{f1}
\begin{split}
(\p_t- \D)  f \le& \frac{3}{2} t^{1/2} | \n ^2 J|+\b |\n J|^2 -C_2 \\
&+ t^{3/2}   \left(\frac{| \n | \n ^2 J |  |^2}{| \n ^2 J |} + C(| \n ^2 J |^2+| \n ^2 J |+1)\right)\\
&+t\b (-| \n^2 J |^2 + C).
\end{split}
\end{equation}
If $ f $ reaches the maximum at some point $(x,t)$ with $ t>0 $, we have $$ 0=\n f=t^{3/2} \n | \n^2 J |+2t\b \left(  \n^2 J, \n J \right)   $$
So that at $ (x, t) $, $ \D f \le 0 $ and 
\begin{equation}
t| \n | \n ^2 J | |^2 \le \b^2  C| \n ^2 J |^2.
\end{equation}
Apply this to \eqref{f1}, with the following two estimates:
 \begin{displaymath}
 \begin{split}
  ( \frac{3}{2}+\b ^2 C+ tC )t^{1/2}| \n ^2 J |& \le C(\b)+t| \n^2 J |^2, \\
   t^{3/2}C| \n ^2 J |^2 & \le Ct| \n ^2 J |^2,
 \end{split}
 \end{displaymath}
where $ C(\b) $ is a constant depending on $ \b $, we have  
  \begin{equation}
  \p_t f \le t(C-\beta)| \n ^2 J |^2+ C(\beta)-C_2.
  \end{equation}
We may choose $ \b > C $  and then choose $ C_2 \ge C(\beta)  $, so that 
 \[  \p_t f \le 0 \] 
at the maximum point.
 By maximum principle we know that $ f \le 0 $, i.e.  $$ |\n^2 J| \le C_2 t^{-1/2}. $$ \\
For $ m \ge 3 $ we prove by induction. Assume that we have estimated 
\begin{displaymath}
|\n^k J| \le C_k t^{-(k-1)/2}, \quad 2 \le k < m.
\end{displaymath}
  Note that  $ t^{-\frac{j}{2}} \le Ct^{-\frac{k}{2}} $ when $ j \le k $ since $ t \le K^{-1} $ is bounded, by \eqref{jm} and the inductive assumption  we have, for $2 \le k <m $,
\begin{equation}\label{jm3}
\begin{split}
 (\p_t -\D) | \n^k J |^2  & \le -2|\n^{k+1} J|^2 +Ct^{-(2k-1)/2}+Ct^{-(k-1)/2}|\n^{k+1} J|\\
  & \le -|\n^{k+1} J|^2 +Ct^{-k} .
\end{split}
\end{equation}
And for $ m $, we have 
\begin{equation}\label{jmm}
\begin{split}
(\p_t -\D) | \n^m J |^2   \le & -2|\n^{m+1} J|^2 +Ct^{-(m-1)/2}|\n^m J|\\ 
&+C |\n^m J|(|\n^{m+1} J|+t^{-1/2}|\n^m J|+t^{-m/2})\\
   \le & Ct^{-1/2}|\n^m J|^2  +Ct^{-m}.
\end{split}
\end{equation}
Let \[ G(t)= t^m| \n^m J |^2 + \sum\limits_{k=2}^{m-1} \frac{\b}{k!}t^k | \n^k J |^2, \quad t\in [0,K^{-1}]. \] 
where $ \b $ is to be determined. Then by \eqref{jm3} and \eqref{jmm} we compute 
\begin{displaymath}
\begin{split}
(\p_t-\D)G \le & mt^{m-1}| \n^m J |^2+ \sum\limits_{k=2}^{m-1} \frac{\b}{(k-1)!}t^{k-1} | \n^k J |^2 \\
& +Ct^{m}( t^{-\frac{1}{2}}| \n^m J |^2+t^{-m}) + \sum\limits_{k=2}^{m-1} \frac{\b}{k!}t^{k} ( -| \n^{k+1} J |^2+Ct^{-k} ) \\
\le & Ct^{m-1}|\n^m J|^2-\frac{\b}{(m-1)!}t^{m-1}|\n^m J|^2+\b t|\n^2 J|^2+C(\b).
\end{split}
\end{displaymath}
Choose $ \b $ large enough so that \[ (\p_t-\D)G \le \b C_2^2 +C(\b). \] Let $ C_m^2=\b C_2^2 +C(\b) $. Then by maximum principle we have 
\[ G(t)\le G(0)+C_m^2 t =C_m^2 t. \]
  Hence $ t^m| \n^m J |^2 \le G \le C_m^2 t $, i.e.
$$ | \n^m J | \le C_m t^{-(m-1)/2}. $$
\end{proof}

\section{Long time existence and finite time singularity}

We prove Theorem \ref{main1} and Theorem \ref{main2} in this section. First we derive some monotonicity formulas and then we prove an $\epsilon$-regularity theorem. We also prove that a harmonic almost complex structure with small energy has to be a K\"ahler structure. Technically a major difference with the harmonic map heat flow lies in the monotonicity formula. There are  terms involved with the curvatures which have different orders (compared with the harmonic map heat flow). Hence the estimates to derive the monotonicity formula are quite different, in particular its dependence of the initial energy $E_0$, see \eqref{z1} and \eqref{z2}. 

\subsection{Monotonicity formula}\label{sub31}

We shall derive some monotonicity formula along the harmonic heat flow for almost complex structures. 
Similar monotonicity formulas using backward heat kernel are well-known for mean curvature flow and harmonic map heat flow, see Struwe \cite{S}, Huisken \cite{H} and Hamilton \cite{Hamilton82}.

First we derive a version of Hamilton's type monotonicity formula. Suppose $J(t)$ is a smooth solution exists on $[0, T)$ for $T<\infty$, the maximal existence time.  Let $k$ be a positive solution of the backward heat equation on $M$,
\[
\p_t k+\Delta k=0, \int_M k=1
\]
We consider the quantity 
\[
Z(t)=(T-t)\int_M |\nabla J|^2 k .
\]
\begin{lem}We have the following,
\begin{equation}\label{m0}
\begin{split}
\p_t Z&+2(T-t)\int_M \left|\Delta J-J\nabla_pJ \nabla_p J+\frac{\nabla_i k\nabla_i J}{k}\right|^2 k\\
&+2(T-t)\int_M \left(\nabla_i\nabla_j k-\frac{\nabla_i k\nabla_j k}{k}+\frac{kg_{ij}}{2(T-t)}\right)\left(\nabla_i J, \nabla_j J\right)\\
=&2(T-t)\int_M \nabla_i k\left(\nabla_j J, [\nabla_i, \nabla_j] J\right).
\end{split}
\end{equation}
\end{lem}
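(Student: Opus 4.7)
The plan is a direct calculation. Differentiate
\[
Z(t)=(T-t)\int_M|\nabla J|^2 k,
\]
substitute the flow $\partial_t J = F := \Delta J - J\nabla_p J\nabla_p J$ and the backward heat equation $\partial_t k = -\Delta k$, and integrate by parts twice to rearrange. Since $\partial_t$ commutes with $\nabla_i$ on tensors, $\partial_t|\nabla J|^2 = 2(\nabla_i J,\nabla_i F)$, so
\[
\partial_t Z = -\int|\nabla J|^2 k + 2(T-t)\int(\nabla_i J,\nabla_i F)\,k - (T-t)\int|\nabla J|^2\,\Delta k.
\]
The two integrations by parts are these: on $\int(\nabla_i J,\nabla_i F)k$, shift a derivative off $F$ to get $-\int(\Delta J,F)k - \int(\nabla_i J,F)\nabla_i k$; on $-\int|\nabla J|^2\Delta k$, integrate by parts twice so as to transfer $\Delta k$ onto $(\nabla_i J,\nabla_j J)$ as a full Hessian, producing
\[
-\int|\nabla J|^2\Delta k = -2\int\nabla_i\nabla_j k\,(\nabla_i J,\nabla_j J) - 2\int(\Delta J,\nabla_j J)\nabla_j k - 2\int\nabla_j k\,(\nabla_i J,[\nabla_i,\nabla_j]J).
\]
The commutator remnant arises precisely from the non-commutativity of $\nabla_i,\nabla_j$ on the tensor $J$ during the second IBP, and is ultimately what feeds the nonzero right-hand side of the formula.

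A key algebraic simplification: since $J^2=-I$ is preserved, both $F$ and every $\nabla_p J$ anticommute with $J$, whereas $J\nabla_p J\nabla_p J$ commutes with $J$ (a short direct check using $(\nabla_p J)J = -J(\nabla_p J)$). The $J$-commuting and $J$-anticommuting subspaces of $\mathrm{End}(TM)$ are orthogonal in the metric-induced inner product (because the involution $A\mapsto -JAJ$ is an isometry of $\mathrm{End}(TM)$), hence $(\Delta J,F)=|F|^2$ and $(\Delta J,\nabla_j J)=(F,\nabla_j J)$. Plugging these in and assembling $\partial_t Z$ together with the two quantities on the LHS of the formula, the pieces $|F|^2 k + 2(F,\nabla_i J)\nabla_i k + |\nabla_i k\,\nabla_i J|^2/k$ collect into the square $|F+\nabla_i k\,\nabla_i J/k|^2 k$. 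The auxiliary $|\nabla_i k\,\nabla_i J|^2/k$ used to complete that square is exactly cancelled by the $-\nabla_i k\nabla_j k/k\cdot(\nabla_i J,\nabla_j J)$ sitting inside the Hessian-type factor, and the leftover $\int|\nabla J|^2 k$ from $\partial_t(T-t)$ pairs with the $kg_{ij}/(2(T-t))$ piece.

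After these cancellations, the only surviving term is $-2(T-t)\int\nabla_j k\,(\nabla_i J,[\nabla_i,\nabla_j]J)$; re-indexing $i\leftrightarrow j$ and using the antisymmetry of $[\nabla_i,\nabla_j]$ converts it into the claimed $2(T-t)\int\nabla_i k\,(\nabla_j J,[\nabla_i,\nabla_j]J)$. The main obstacle is really only the bookkeeping in the double IBP: carrying the commutator $[\nabla_i,\nabla_j]J$ cleanly is what distinguishes this tensor-valued setting from the harmonic-map-heat-flow analog, and is exactly the extra subtlety the authors highlight in the introduction.
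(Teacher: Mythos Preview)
Your proof is correct and follows essentially the same direct-computation-plus-integration-by-parts strategy as the paper (differentiate $Z$, integrate by parts, complete the square to produce the $|F+\nabla_ik\,\nabla_iJ/k|^2$ term, and isolate the commutator). The one notable difference is that you handle the key algebraic simplification $(\nabla_iJ,\,J\nabla_pJ\nabla_pJ)=0$ and $(\Delta J,F)=|F|^2$ via the isometric involution $A\mapsto -JAJ$ and orthogonality of its eigenspaces, whereas the paper obtains the equivalent identities by explicit index manipulation (its \eqref{m3}--\eqref{m4} and the separately stated Lemma~\ref{lemma1}); your phrasing is cleaner, but the content is the same.
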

\begin{proof}
We compute
\begin{equation}\label{m1}
\begin{split}
\p_t Z=&-\int_M |\nabla J|^2 k +(T-t)\int_M \p_t (|\nabla J|^2 k)\\
=&-\int_M |\nabla J|^2 k+(T-t)\int_M 2\left(\nabla J, \nabla (\p_t J)\right)k-\Delta k |\nabla J|^2\\
=&-\int_M |\nabla J|^2 k-2(T-t)\int_M \left(k\Delta J+\nabla_i k \nabla_i J, \p_t J\right)\\
&-(T-t)\int_M \Delta k |\nabla J|^2
\end{split}
\end{equation}
We compute
\begin{equation}\label{m2}
\begin{split}
\int_M \left(k\Delta J+\nabla_i k \nabla_i J, \p_t J\right)=&\int_M \left(k\Delta J+\nabla_i k\nabla_i J, \Delta J-J\nabla_p J\nabla_p J\right)
\end{split}
\end{equation}
We compute that 
\begin{equation}\label{m3}
\begin{split}
\int_M \left(\nabla_i (k\nabla_i J), J\nabla_p J\nabla_p J\right)=&\int_M \left(\nabla_i (k\nabla_i J_j^k), J_j^a\nabla_pJ^b_a \nabla_p J^k_b\right)\\
=&-\int_M k\left(\nabla_i J^k_j, \nabla_i (J_j^a\nabla_pJ^b_a \nabla_p J^k_b)\right)\\
=&\int_M k \left(\nabla_i J^k_j \nabla_i J^a_j, \nabla_pJ^a_b\nabla_pJ^k_b\right)
\end{split}
\end{equation}
where we use the following pointwise identity, 
\[
\left(\nabla_i J^k_j,  J_j^a(\nabla_i\nabla_pJ^b_a) \nabla_p J^k_b\right)+\left(\nabla_i J^k_j,  J_j^a\nabla_pJ^b_a (\nabla_i\nabla_p J^k_b)\right)=0
\]
Hence by \eqref{m3}, we have
\begin{equation}\label{m4}
\begin{split}
\int_M \left(\nabla_i (k\nabla_i J), J\nabla_p J\nabla_p J\right)=&\int_M k \left(\nabla_i J^k_j \nabla_i J^j_a, \nabla_pJ^k_b\nabla_pJ^b_a\right)\\
=&\int_M k |\nabla_i J \nabla_i J|^2.
\end{split}
\end{equation}
Denote 
\[
I:=\int_M \left|\Delta J-J\nabla_pJ \nabla_p J+\frac{\nabla_i k\nabla_i J}{k}\right|^2 k
\]
Using \eqref{m2} and \eqref{m4}, it is straightforward to check that we have, 
\begin{equation}\label{m5}
\begin{split}
I=\int_M \left(k\Delta J+\nabla_i k\nabla_i J, \p_t J\right)+\int_M \frac{|\nabla_i k \nabla_i J|^2}{k}+\int_M (\nabla_i k\nabla_i J, \Delta J)
\end{split}
\end{equation}
We compute 
\begin{equation}
\begin{split}
\int_M \frac{|\nabla_i k \nabla_i J|^2}{k}=&-\int_M \left(\nabla_i\nabla_j k-\frac{\nabla_i k \nabla_j k}{k}\right)(\nabla_i J, \nabla_j J)\\
&-\int_M \nabla_j k(\Delta J, \nabla_j J)-\int_M \nabla_j k (\nabla_{i}J, \nabla_{i}\nabla_j J)
\end{split}
\end{equation}
Denote 
\[
II:=\int_M \left(\nabla_i\nabla_j k-\frac{\nabla_i k\nabla_j k}{k}+\frac{kg_{ij}}{2(T-t)}\right)\left(\nabla_i J, \nabla_j J\right)
\]
Using \eqref{m1}, \eqref{m4} and \eqref{m5}, we compute
\begin{equation}\label{m6}
\p_t Z+2(T-t)(I+II)=2(T-t)\int_M \nabla_i k\left(\nabla_j J, [\nabla_i, \nabla_j] J\right)
\end{equation}
This completes the proof. 
\end{proof}

\begin{rmk}The term on the righthand side in \eqref{m0} is zero in the harmonic map heat flow. This term needs extra care in the estimates. Similar terms will also appear in the following when we consider a local version. 
\end{rmk}

To make use of the formula derived above, one would need to estimate the backward heat kernel, and in particular the quantity $\nabla_i\nabla_j k-\frac{\nabla_i k\nabla_j k}{k}+\frac{kg_{ij}}{2(T-t)}$ locally, as in Hamilton's paper \cite{Hamilton821}. Note that this term is zero when the metric is Euclidean. 

The following local version of monotonicity formula is more relevant for our purpose, which is similar as in the harmonic map flow developed in \cite{S} and \cite{CS}. Without  loss of generality we assume  the injectivity radius  of $(M, g)$ is greater than $1$. Then  at an arbitrary point $ y\in M $, let $ x_i $ be a normal coordinate centre at $ y $, via which the geodesic ball $ B_1(y) \subset M $ is diffeomorphic to the Euclidean ball $ B_1(0) \subset \R^n $.  We can regard $ J $ as a tensor defined on $ B_1(0) \times [0,T) \subset \R^n \times [0,T)$. For any $ 0<T_0 \le T $, define 
\begin{equation}\label{Zt}
\begin{split}
& Z(t)=(T_0-t) \int_{\R^n} |\n J|^2 G \phi^2 \sqrt{|g|} dx, \\
& \Psi(R) =\int_{T_0-4R^2}^{T_0-R^2} \int_{\R^n} |\n J|^2 G \phi^2 \sqrt{|g|} dx dt,
\end{split}
\end{equation} 
where $ G=\frac{1}{(4\pi (T_0-t))^{n/2}} \exp(-\frac{|x|^2}{4(T_0-t)}) $ is the Euclidean heat kernel and $ \phi $ is a test function whose support is contained in  $ B_1(0) $, plus that $ \phi=1 $ in $ B_{1/2}(0) $. The norm $ |\n J|^2 $ is taken in the metric $ g $, i.e.
 \[ |\n J|^2=g_{i_1i_2}g^{j_1j_2} g^{k_1k_2}\n_{k_1}J_{j_1}^{i_1}\n_{k_2}J_{j_2}^{i_2} , \] 
where
 \[ \n_k J_\a^\b= \frac{\partial}{\partial x_k}J_\a^\b +\Gamma_{lk}^\b J_\a^l - \Gamma_{\a k}^l J_l^\b .\]

\begin{thm}\label{mon1}
For any $ T_0-\min \{ T_0,1 \}<t_1\le t_2 <T_0 $ and $ N>1 $ there holds the monotonicity formula
\begin{equation}\label{z1}
 Z(t_2)\le e^{C(f_2-f_1)}Z(t_1)+C\left( N^{n/2} (E_0+\sqrt{E_0})+\frac{1}{\ln ^2 N} \right)(t_2-t_1) 
\end{equation}
with a uniform constant $ C $ depending only on $ (M,g) $,
where \[ f(t)=-(T_0-t)\ln^2(T_0-t)+2(T_0-t)\ln (T_0-t)-3(T_0-t) .\]
\end{thm}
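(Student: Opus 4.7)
The strategy is to follow the Struwe--Chen--Struwe scheme: differentiate $Z(t)$ in time, substitute $\partial_t J=\Delta_g J-J\nabla_p J\nabla_p J$, and integrate by parts against the weight $G\phi^2\sqrt{|g|}$ so as to complete a perfect square with the right sign. Compared with harmonic maps, three new issues arise: the curvature commutator $[\nabla_i,\nabla_j]J$ already visible in the Hamilton-type calculation \eqref{m0}; the discrepancy $g_{ij}-\delta_{ij}=O(|x|^2)$ between the background metric and the Euclidean one in which $G$ is defined; and the presence of the cutoff $\phi$, which breaks translation invariance and produces boundary errors supported in $\{1/2\le|x|\le 1\}$.

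The first step is the direct differentiation, mimicking \eqref{m0}--\eqref{m6} but now localized by $\phi^2\sqrt{|g|}$. The pointwise identity behind \eqref{m4} is purely algebraic and continues to hold, so the $J\nabla_pJ\nabla_pJ$ contribution can still be absorbed into the square
\begin{equation*}
-2(T_0-t)\int\Bigl|\Delta_g J-J\nabla_pJ\nabla_pJ+\frac{\nabla_i(G\phi^2\sqrt{|g|})}{G\phi^2\sqrt{|g|}}\nabla_iJ\Bigr|^2 G\phi^2\sqrt{|g|}\,dx,
\end{equation*}
which is nonpositive and may be dropped. The surviving errors split into three groups: (i) a Hessian-type term $\int(\nabla_i\nabla_j\log G+\frac{g_{ij}}{2(T_0-t)})(\nabla_iJ,\nabla_jJ)G\phi^2\sqrt{|g|}\,dx$, which vanishes in the Euclidean metric and is thus of size $|x|^2/(T_0-t)$ after normal-coordinate expansion; (ii) a curvature/commutator term $\int\nabla_iG(\nabla_jJ,[\nabla_i,\nabla_j]J)\phi^2\sqrt{|g|}\,dx$ together with derivatives of $\sqrt{|g|}$; and (iii) cutoff-boundary terms involving $\nabla\phi$, where $G\le C(T_0-t)^{-n/2}e^{-c/(T_0-t)}$ is exponentially small and contributes at most $CE_0$.

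The heart of the argument, and the main obstacle, is how (i) is absorbed. I would split the integration region at scale $|x|\sim N\sqrt{T_0-t}$. On $\{|x|\le N\sqrt{T_0-t}\}$ one has $|x|^2/(T_0-t)\le N^2$, so together with (ii)--(iii) estimated by Cauchy--Schwarz and the pointwise bound $|\nabla J|\le|\nabla J|^2+1$, and with the logarithmic weight $\log G\sim-\tfrac{n}{2}\log(T_0-t)$, one obtains a contribution of the form $C(1+\ln^2(T_0-t))Z(t)=Cf'(t)Z(t)$. On the outer region $\{|x|>N\sqrt{T_0-t}\}$, the global energy bound $\int|\nabla J|^2\le E_0$ combined with a sharp tail estimate $\int_{|x|>N\sqrt{T_0-t}}|x|^2G\,dx\le C(T_0-t)/\ln^2 N$ (extracted from a logarithmic cutoff in $|x|$) yields the $C/\ln^2 N$ piece, while Cauchy--Schwarz cross terms and the integration-by-parts remainders against $\sqrt{|g|}$ are bounded by $CN^{n/2}(E_0+\sqrt{E_0})$, the $\sqrt{E_0}$ arising from terms linear in $|\nabla J|$.

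Assembling these yields the differential inequality
\begin{equation*}
\partial_t Z(t)\le Cf'(t)Z(t)+C\bigl(N^{n/2}(E_0+\sqrt{E_0})+\tfrac{1}{\ln^2 N}\bigr),
\end{equation*}
with $f'(t)=\ln^2(T_0-t)+1$, and Gr\"onwall's inequality from $t_1$ to $t_2$ produces \eqref{z1}. The most delicate point, and what I expect to require the most care, is balancing the $|x|^2$-error: absorbing too much of it into the multiplicative factor would make $f(t)$ grow faster than $\ln^2$, while absorbing too little would destroy the $1/\ln^2 N$ gain needed for the additive error to vanish as $N\to\infty$. This balance, together with the bookkeeping of the $\sqrt{E_0}$ cross terms coming from the non-Euclidean integration by parts, is precisely what forces the peculiar shape of $f(t)$.
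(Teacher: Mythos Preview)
Your overall architecture is right and matches the paper: differentiate $Z$, complete the square $|\Delta J-J\nabla_pJ\nabla_pJ+\cdots|^2$, classify the remaining errors into metric-discrepancy, curvature-commutator, and cutoff pieces, and close with Gr\"onwall against $f'(t)=\ln^2(T_0-t)+1$. The gap is in the splitting mechanism, which is precisely where this proof lives.

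Your proposed spatial split at $|x|\sim N\sqrt{T_0-t}$ for the $|x|^2|\nabla J|^2G$ error does not do what you claim. On the inner region it gives $N^2 Z(t)$, not $\ln^2(T_0-t)Z(t)$; there is no mechanism by which a fixed $N$-scale produces a $t$-dependent logarithm. On the outer region, the combination ``energy bound $\int|\nabla J|^2\le E_0$ plus tail estimate $\int_{|x|>N\sqrt{T_0-t}}|x|^2G\,dx$'' is not a legal product: you cannot control $\int|x|^2|\nabla J|^2G$ by the product of two separate $L^1$ quantities. The paper instead splits this quadratic error at the \emph{time-dependent} scale $|x|^2=(T_0-t)\ln^2(T_0-t)$: the inner region gives $\ln^2(T_0-t)Z(t)$ directly, and on the outer region one bounds $|x|^2G$ pointwise by $C(T_0-t)^{-n/2}\exp(-\tfrac14\ln^2(T_0-t))$, which is uniformly bounded in $t$, and then uses $E_0$. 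The parameter $N$ does not enter this term at all.

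Where $N$ actually enters is through a \emph{time} dichotomy $T_0-t\gtrless 1/N$, applied to the linear curvature term $\int|x|\,|\nabla J|\,G\phi^2$ and to the cutoff errors. When $T_0-t>1/N$ one uses the crude bound $G\le CN^{n/2}$ together with Cauchy--Schwarz on $\int|\nabla J|$ to produce $CN^{n/2}\sqrt{E_0}$; when $T_0-t\le 1/N$ the cutoff errors are harmless (since $G$ is bounded on $\operatorname{supp}\nabla\phi$) and the linear term is handled by Young's inequality
\[
|x|\,|\nabla J|\;\le\;\frac{1}{4\ln^2(T_0-t)}\,\frac{|x|^2}{T_0-t}\;+\;\ln^2(T_0-t)\,(T_0-t)|\nabla J|^2,
\]
whose first piece integrates to $\le C/\ln^2(T_0-t)\le C/\ln^2 N$ and whose second piece is $\ln^2(T_0-t)Z(t)$. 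Thus the $\ln^2(T_0-t)$ coefficient in $f'$ is forced by the quadratic-term split, while the $N^{n/2}(E_0+\sqrt{E_0})+1/\ln^2N$ structure of the additive error comes entirely from the time dichotomy on the linear and cutoff terms. Your proposal conflates these two mechanisms; once you separate them as above, the rest of your outline goes through.
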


\begin{thm}\label{mon2}
For any $ 0 < R_2 \le R_1 \le \min \{\sqrt{T_0}/2,1 \} $ and $ N>1 $ there holds the monotonicity formula
 \begin{equation}\label{z2} 
 \Psi(R_2)\le e^{C(\tilde{f}_2-\tilde{f}_1)}\Psi(R_1)+C\left( N^{n/2} (E_0+\sqrt{E_0})+\frac{1}{\ln ^2 N} \right) ( R_1-R_2 ) 
  \end{equation}
   with a uniform constant $ C $ depending only on $ (M,g) $, where \[ \tilde{f}(R)=-4R^2\ln ^2 R +4R^2\ln R -3R^2 . \] 
\end{thm}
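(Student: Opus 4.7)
The strategy is to deduce Theorem \ref{mon2} from Theorem \ref{mon1} by time-integration. The starting point is the identity obtained via the substitution $t = T_0 - R^2 s^2$ with $s \in [1,2]$ (so $dt = -2R^2 s\, ds$):
\[
\Psi(R) = 2\int_1^2 \frac{Z(T_0 - R^2 s^2)}{s}\, ds,
\]
which expresses $\Psi(R)$ as a weighted average of $Z$ at rescaled times. For each fixed $s \in [1,2]$, I would apply Theorem \ref{mon1} with $t_1(s) = T_0 - R_1^2 s^2$ and $t_2(s) = T_0 - R_2^2 s^2$ (so $t_1 \le t_2$ since $R_2 \le R_1$), producing the pointwise comparison
\[
Z(t_2(s)) \le e^{C(f(t_2) - f(t_1))} Z(t_1(s)) + C\bigl(N^{n/2}(E_0 + \sqrt{E_0}) + \tfrac{1}{\ln^2 N}\bigr)(t_2 - t_1).
\]
Since $t_2 - t_1 = s^2(R_1^2 - R_2^2) \le 8(R_1 - R_2)$ uniformly in $s$, dividing by $s$ and integrating over $[1,2]$ produces the additive error in precisely the claimed form.

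The key algebraic step is to control $f(t_2) - f(t_1)$ in terms of $\tilde f(R_2) - \tilde f(R_1)$. Setting $u = R^2 s^2$ and expanding $\ln u = 2\ln R + 2\ln s$ in $f(t) = -u\ln^2 u + 2u\ln u - 3u$ gives the identity
\[
f(T_0 - R^2 s^2) \;=\; s^2\, \tilde f(R) \,+\, 4 R^2 s^2 \ln s\, (1 - \ln s - 2\ln R).
\]
Subtracting at $R_2$ and $R_1$ yields $f(t_2) - f(t_1) = s^2(\tilde f(R_2) - \tilde f(R_1)) + \mathcal{E}$, where the perturbation $\mathcal{E}$ depends only on the differences $R_1^2 - R_2^2$ and $R_1^2\ln R_1 - R_2^2 \ln R_2$. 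For $s \in [1,2]$ one has $\ln s \ge 0$ and $1 - \ln s > 0$; combined with the fact that $R \mapsto R^2 \ln R$ is decreasing on $(0, e^{-1/2}]$, a direct sign check gives $\mathcal{E} \le 0$ in the small-scale regime. Hence $f(t_2) - f(t_1) \le s^2(\tilde f_2 - \tilde f_1) \le 4(\tilde f_2 - \tilde f_1)$ uniformly in $s$, and the factor $e^{4C(\tilde f_2 - \tilde f_1)}$ pulls outside the $s$-integral, producing the desired $e^{C(\tilde f_2 - \tilde f_1)}\Psi(R_1)$ term after renaming constants.

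The main obstacle is the regime where the $R_i$ lie above $e^{-1/2}$, in which $R \mapsto R^2 \ln R$ is no longer monotonic and the sign of $\mathcal{E}$ can fail. There I would instead invoke the mean value theorem (using that $p(R) = R^2 \ln R$ and $p'(R) = R(2\ln R + 1)$ are uniformly bounded on $(0, 1]$) to obtain $|\mathcal{E}| \le C'(R_1 - R_2)$, yielding only $e^{C(f_2 - f_1)} \le e^{4C(\tilde f_2 - \tilde f_1)}\bigl(1 + C''(R_1 - R_2)\bigr)$. The resulting multiplicative excess $C''(R_1 - R_2)\Psi(R_1)$ is then converted to an additive error by using a uniform $L^\infty$ bound $\Psi(R_1) \le C$, which follows from the boundedness of $\int G \phi^2 \sqrt{|g|}\, dx$ and the a priori control of the energy along the flow. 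Together these two cases produce the monotonicity formula with a single uniform constant $C = C(M, g)$, completing the proof.
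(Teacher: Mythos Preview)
Your approach is essentially the same as the paper's. The substitution $t=T_0-R^2s^2$, $s\in[1,2]$, that you use is precisely the paper's change of variables $t=\alpha\tilde t+(1-\alpha)T_0$ with $\alpha=R_2^2/R_1^2$ (write $\tilde t=T_0-R_1^2s^2$), and both proofs then apply Theorem~\ref{mon1} at each fixed $s$ and integrate. The additive error is handled identically: $t_2-t_1=s^2(R_1^2-R_2^2)$ integrates to a constant multiple of $R_1^2-R_2^2\le 2(R_1-R_2)$.

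Where you differ is only in the bookkeeping for the exponential factor. The paper bounds $e^{C(f(t)-f(\tilde t))}$ in one line by observing that $\tilde t\mapsto f(\alpha\tilde t+(1-\alpha)T_0)-f(\tilde t)$ is decreasing on $[T_0-4R_1^2,T_0-R_1^2]$, so its maximum occurs at the left endpoint, which (after absorbing constants into $C$) is $\tilde f(R_2)-\tilde f(R_1)$. Your explicit identity
\[
f(T_0-R^2s^2)=s^2\tilde f(R)+4R^2s^2\ln s\,(1-\ln s-2\ln R)
\]
is correct and proves the same inequality more directly; since $\tilde f$ is decreasing on $(0,1]$ (indeed $\tilde f'(R)=-8R\ln^2R-2R<0$), one has $\tilde f_2-\tilde f_1\ge 0$ and your bound $s^2(\tilde f_2-\tilde f_1)\le 4(\tilde f_2-\tilde f_1)$ is legitimate. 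The ``obstacle'' you raise for $R_i>e^{-1/2}$ is not really one: in that regime $R_1^{2-n}$ is uniformly bounded, so $\Psi(R_1)\le CR_1^{2-n}E_0\le CE_0$, and the extra $C''(R_1-R_2)\Psi(R_1)$ is absorbed into the $N^{n/2}E_0(R_1-R_2)$ term already present (since $N>1$). So your two-case analysis is correct but unnecessary; the paper's monotonicity observation handles all $0<R\le 1$ at once.
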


Before we treat the monotonicity formulas, first we prove two identities that will be used in the further computation.
\begin{lem}\label{lemma1}There holds
 \begin{equation}
 \begin{split}
 &\left(  \n_i J, J\n_p J\n_p J  \right)=0 \\
 &\left(  \D J-J\n_p J\n_p J, J\n_p J\n_p J  \right)=0
 \end{split}
 \end{equation}
\end{lem}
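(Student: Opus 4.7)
My plan is to prove both identities as pointwise algebraic consequences of the compatibility hypotheses, with the cyclic invariance of the trace doing all the heavy lifting. It will be convenient to set $M:=g^{pq}\n_pJ\,\n_qJ$ so that the quantity $J\n_pJ\,\n_pJ$ appearing in the statement is exactly $JM$, and to interpret the tensor inner product as a trace pairing $(A,B)=\text{tr}(AB^*)$, with $*$ the $g$-adjoint. In this language the two ingredients I need are: (i) differentiating $J^2=-\I$ gives the anticommutation $J\,\n_pJ+\n_pJ\cdot J=0$, and applying one more derivative $\n_q$ and contracting with $g^{pq}$ yields the companion relation $J\D J+\D J\cdot J=-2M$; (ii) $g$-compatibility forces $J^*=-J$ and $(\n_pJ)^*=-\n_pJ$, from which one reads off $M^*=M$ (self-adjoint) and $MJ=JM$ (two anticommutations with $J$ compose into a commutation), so $JM$ itself is skew-adjoint.

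For the first identity I would write $(\n_iJ,JM)=-\text{tr}(\n_iJ\cdot JM)$ and observe that inside the trace the middle $J$ can be moved to the left of $\n_iJ$ at the cost of a sign, via the anticommutation of (i); cycling the trace then restores a $J$ adjacent to $M$, and exchanging $MJ$ for $JM$ returns the original expression with the opposite sign. The resulting equation forces the trace to vanish.

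For the second identity the same cyclic-trace mechanism applies, but now I must match a nonzero quantity rather than show a vanishing. A direct computation with $JM=MJ$ and $J^2=-\I$ gives $(JM,JM)=\text{tr}(M^2)=|M|^2$. To recover this value for $(\D J,JM)$, I would substitute $\D J\cdot J=-J\D J-2M$ from (i) into $\text{tr}(\D J\cdot JM)$, whereupon the same cyclic-trace manoeuvre produces the self-referential equation
\[\text{tr}(\D J\cdot JM)=-\text{tr}(\D J\cdot JM)-2\text{tr}(M^2),\]
which solves to $(\D J,JM)=\text{tr}(M^2)=(JM,JM)$, and the second identity follows by subtraction. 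I anticipate no substantial obstacle beyond careful sign-tracking; the only mildly counterintuitive point is that $JM$ is typically nonzero and in fact $|JM|^2=|M|^2$, so the second identity represents a genuine cancellation rather than the vanishing of $JM$ itself.
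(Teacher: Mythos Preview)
Your proposal is correct and is essentially the paper's own argument, recast in coordinate-free trace language. The paper works in normal coordinates and, for the first identity, juggles indices using $J_\alpha^\beta=-J_\beta^\alpha$ and $J\nabla J=-\nabla J\,J$ to show the expression equals its own negative; your cyclic-trace version is the same computation. For the second identity the paper expands $4|M|^2=|J\Delta J+\Delta J\,J|^2$ and simplifies to obtain $|M|^2=-(J\Delta J,M)$, whereas you substitute $\Delta J\cdot J=-J\Delta J-2M$ directly into $\text{tr}(\Delta J\cdot JM)$; both routes use exactly the same ingredients (the second-order relation $J\Delta J+\Delta J\,J=-2M$, orthogonality of $J$, and cyclicity), and yours is marginally more direct.
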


\begin{proof}
For arbitrary point we take a normal coordinate so that $ J^2=1 $ and $ g(J,J)=g $ are equivalent to 
\begin{equation} 
J_\b^\c J_\a^\b =-\delta_\a^\c  , \quad  J_\a^\b=-J_\b^\a 
\end{equation}  
in local coordinate. And $ J\n J+\n J J=0  $ implies \begin{equation}  
J_\a^\b \n J_\b^\c =-\n J_\a^\b  J_\b^\c. 
\end{equation} 
 Then we have
\begin{displaymath}
\begin{split}
\left( \n_i J, J\n_p J\n_p J \right) &=\n_i J_\a^\b  J_j^\b \n_p J_k^j \n_p J_\a^k \\
& = -  \n_i J_j^\b J_\a^\b (-\n_p J_j^k) (-\n_p J_k^\a) \\
&=-\left( \n_i J, J\n_p J\n_p J \right).
\end{split}
\end{displaymath}
This proves  $ \left(  \n_i J, J\n_p J\n_p J  \right)=0 $. \\

For the second equation of the lemma we use
 \[ J\D J+\D J J +2\n_p J \n_p J=0 ,\] so that 
  \begin{equation}
  \begin{split}
 4 \left| \n_p J \n_p J \right|^2 &= \left| J\D J+\D J J \right|^2 \\
 &= \left|  J\D J \right|^2 +\left|  \D J J \right|^2+2\left(  J\D J, \D J J\right).
  \end{split}
  \end{equation}
  Note that \[ \left|  \D J J \right|^2= \D J_\a^j J_j^\b \D J_\a^k J_k^\b = \D J_\a^j \D J_\a^k \delta_{jk} =\left| \D J \right|^2 \]
  and \[ \left(  J\D J, \D J J\right) =\left(  J\D J, -J \D J -2\n_p J \n_p J\right)=-\left| \D J \right|^2-2\left( J \D J, \n_p J \n_p J \right) , \]
  we have 
  \begin{equation}
  \left| \n_p J \n_p J \right|^2 = - \left( J \D J, \n_p J \n_p J \right) .
  \end{equation} 
  This implies the result.
\end{proof}

\begin{proof}[Proof of Theorem \ref{mon1}]

Compute 
\begin{displaymath}
\begin{split}
\frac{d}{dt} Z(t)  = & -\int_{\R^n} |\n J|^2 G \phi^2 \sqrt{|g|} dx \\
& -2(T_0-t) \int \left| \D J  - J\n_p J \n_p J- \frac{x\cdot \n J }{2(T_0-t)} \right|^2  G \phi^2 \sqrt{|g|} dx  \\ 
&-2(T_0-t)\int \left(-\frac{|x\cdot \n J|^2}{4(T_0-t)^2}G \phi^2 \sqrt{|g|}+\frac{x\cdot \n J \cdot \D J}{2(T_0-t)}G \phi^2 \sqrt{|g|} \right) dx  \\
&- 2(T_0-t)\int \n J\cdot (\D J-J\n_p J \n_p J)G(2\phi \n \phi \sqrt{|g|} + \frac{1}{2}\phi^2 g^{ij}\n g_{ij} \sqrt{|g|} ) dx \\
&+(T_0-t)\int_{\R^n} |\n J|^2 \p_t G \phi^2 \sqrt{|g|} dx \\
=& I+II+III+IV+V.
\end{split}
\end{displaymath}
We estimate $ IV $ as following,
\begin{displaymath}
\begin{split}
|IV|  \le & 2(T_0-t)\int \left| \n J\cdot (\D J-J\n_p J \n_p J +\frac{x\cdot \n J }{2(T_0-t)})G(2\phi \n \phi \sqrt{|g|} + \frac{1}{2}\phi^2 g^{ij}\n g_{ij} \sqrt{|g|} ) \right| dx\\
&+2(T_0-t)\int \left| \n J\cdot (x \cdot \n J)G(2\phi \n \phi \sqrt{|g|} + \frac{1}{2}\phi^2 g^{ij}\n g_{ij} \sqrt{|g|} ) \right| dx\\
\le & \frac{1}{2} |II| + 2C(T_0-t)\int |\n J|^2 |\n \phi |^2  G \sqrt{|g|}dx +C Z(t) \\
&+ CZ(t)+C\int |\n J|^2 G(\phi-\phi^2)\sqrt{|g|}dx.
\end{split}
\end{displaymath}
If $ T_0-t>\frac{1}{N} $, we have $ G<CN^{n/2} $, $ (T_0-t)G<CN^{n/2-1}<CN^{n/2} $. So that \[ |IV|<\frac{1}{2} |II|+CZ(t)+CN^{n/2}E_0. \]
If $ T_0-t \le \frac{1}{N}<1 $, note that in $ B_{1/2}(0) $, $ \n \phi=\phi-\phi^2=0 $, and  $ G $ is bounded outside $ B_{1/2}(0) $, so 
\[ |IV|  \le  \frac{1}{2} |II| + CZ(t)+CE_0 .\]
So in general for $ 0<t<T_0 $ there is a constant $C$ such that  
\begin{equation}\label{IV}
 |IV|<\frac{1}{2} |II|+CZ(t)+CN^{n/2}E_0.
  \end{equation}
To estimate $ I+III+V $, we compute 
\begin{displaymath}
\begin{split}
III =&\frac{1}{2(T_0-t)}\int |x \cdot \n J|^2 G \phi^2 \sqrt{|g|}dx - \int x\cdot \n J \cdot \D J  G \phi^2 \sqrt{|g|}dx \\
=& \frac{1}{2(T_0-t)}\int |x \cdot \n J|^2 G \phi^2 \sqrt{|g|}dx  +\int \n (x \cdot \n J) \n J G \phi^2 \sqrt{|g|}dx \\
& -\int (x \cdot \n J) \frac{\n J\cdot xG}{2(T_0-t)}  \phi^2 \sqrt{|g|}dx + \int (x\cdot \n J)( \n J \n \phi )2\phi G\sqrt{|g|}dx\\
=&   \int \n_j x_i \n_i J \n_j JG \phi^2 \sqrt{|g|}dx
+ \int \frac{1}{2}x_i \n_i |\n J|^2 G \phi^2 \sqrt{|g|}dx\\
& - \int x_i [\n_i,\n_j]J \n_j J G \phi^2 \sqrt{|g|}dx
+ \int (x\cdot \n J)( \n J \n \phi )2\phi G\sqrt{|g|}dx \\
=&  \int \n_j x_i \n_i J \n_j JG \phi^2 \sqrt{|g|}dx -\int \frac{g^{ij} \sigma_{ij}}{2} |\n J|^2 G \phi^2 \sqrt{|g|}\\
& + \frac{1}{4(T_0-t)}\int |x|^2 |\n J|^2 G \phi^2 \sqrt{|g|}dx \\& - \int x_i [\n_i,\n_j]J \n_j J G \phi^2 \sqrt{|g|}dx
+ \int (x\cdot \n J)( \n J \n \phi )2\phi G\sqrt{|g|}dx.
\end{split}
\end{displaymath}
So that
\begin{displaymath}
\begin{split}
I+III+V= & \int (\sigma^{ij}-g^{ij})\n_i J \n_j J G \phi^2 \sqrt{|g|}dx\\
&+\int  \frac{n-trace(g)}{2}|\n J|^2 G \phi^2 \sqrt{|g|}dx \\
& - \int x_i [\n_i,\n_j]J \n_j J G \phi^2 \sqrt{|g|}dx\\
&+ \int (x\cdot \n J)( \n J \n \phi )2\phi G\sqrt{|g|}dx .
\end{split}
\end{displaymath}
Similar to the estimate in $ IV $, we have 
\[  \left|  \int (x\cdot \n J)( \n J \n \phi )2\phi G\sqrt{|g|}dx \right| \le CE_0 . \]
Since $ |\sigma^{ij}-g^{ij}| \le C|x|^2 $ and $ |n-trace(g)| \le C|x|^2 $, we have \[ | I+III+V | \le CE_0+C\int |x|^2|\n J|^2 G \phi^2 \sqrt{|g|}dx+ C\int |x||\n J| G \phi^2 \sqrt{|g|}dx .\]

We now estimate
\begin{displaymath}
\begin{split}
\int |x|^2|\n J|^2 G \phi^2 \sqrt{|g|}dx  \le & \int_{|x|^2 \le (T_0-t)\ln^2(T_0-t)} |x|^2|\n J|^2 G \phi^2 \sqrt{|g|}dx \\
 &+ \int_{|x|^2 > (T_0-t)\ln^2(T_0-t)} |x|^2|\n J|^2 G \phi^2 \sqrt{|g|}dx \\
 \le & \ln^2(T_0-t)Z(t)+C\frac{\exp\{-\ln^2(T_0-t)/4\}}{(T_0-t)^{n/2}}E_0 \\
 \le & \ln^2(T_0-t)Z(t)+CE_0
\end{split}
\end{displaymath}
 since  $ \frac{\exp\{-\ln^2(T_0-t)/4\}}{(T_0-t)^{n/2}}  $ is uniformly bounded for $ 0 < t <T_0 $.
 
  As for the term $\int |x||\n J| G \phi^2 \sqrt{|g|}dx $, if $ T_0-t>\frac{1}{N} $, by $ G<CN^{n/2} $ we have \[ \int |x||\n J| G \phi^2 \sqrt{|g|}dx  \le CN^{n/2}\sqrt{E_0}. \] 
If $ T_0-t\le \frac{1}{N}<1 $, we have 
\begin{displaymath}
\begin{split}
 \int |x||\n J| G \phi^2 \sqrt{|g|}dx &\le \dfrac{1}{4\ln ^2 (T_0-t)}\int \dfrac{|x|^2}{T_0-t} G\phi^2 \sqrt{|g|}dx + \ln ^2(T_0-t) Z(t) \\
 & \le \dfrac{C}{\ln ^2 N}+\ln ^2(T_0-t) Z(t).
\end{split}
\end{displaymath} \\
 To sum up, we have, for $ 0<t<T_0 $,
 \begin{equation}\label{V}
 |I+III+V|  \le C \ln^2(T_0-t)Z(t)+CE_0+ \dfrac{C}{\ln ^2 N} + CN^{n/2}\sqrt{E_0}
 \end{equation}
 
 Inequalities \eqref{IV} and \eqref{V} give us
 \[ \frac{d}{dt}Z(t) \le -\frac{1}{2}|II|+ C\ln^2(T_0-t)Z(t)+CZ(t)+CN^{n/2}(E_0+\sqrt{E_0})+\frac{C}{\ln ^2 N} \] for some constant $ C $.
Let $ f(t)=-(T_0-t)\ln^2(T_0-t)+2(T_0-t)\ln (T_0-t)-3(T_0-t) $ so that $ f'(t)=\ln^2(T_0-t)+1 $. Then 
\begin{displaymath}
\begin{split}
\frac{d}{dt}\left( e^{-Cf}Z \right)= & e^{-Cf}\left( \frac{dZ}{dt} -C\ln^2(T_0-t)Z-CZ \right) \\
\le  & Ce^{-Cf}\left( N^{n/2}(E_0+\sqrt{E_0})+\frac{1}{\ln ^2 N}  \right),
\end{split}
\end{displaymath}
and the result follows.

\end{proof}

\begin{proof}[Proof of Theorem \ref{mon2}]
  
Let $ \alpha=\frac{R_2^2}{R_1^2} \le 1 $ and $ t=\alpha \tilde{t}+(1-\a) T_0 \ge \tilde{t} $. By Theorem \ref{mon1} we have
\begin{displaymath}
\begin{split}
\Psi(R_2)&=\int_{T_0-4R_2^2}^{T_0-R_2^2}\dfrac{Z(t)}{T_0-t}dt \\
&=\int_{T_0-4R_1^2}^{T_0-R_1^2}\dfrac{Z(t)}{T_0-\tilde{t}}d\tilde{t}\\
&\le \int_{T_0-4R_1^2}^{T_0-R_1^2} e^{C(f(t)-f(\tilde{t}))} \dfrac{Z(\tilde{t})}{T_0-\tilde{t}}+C\left( N^{n/2}(E_0+\sqrt{E_0})+\frac{1}{\ln ^2 N}  \right)\dfrac{t-\tilde{t}}{T_0-\tilde{t}}d\tilde{t}\\
&\le e^{ C(\tilde{f}(R_2)-\tilde{f}(R_1))  } \Psi(R_1)+C\left( N^{n/2}(E_0+\sqrt{E_0})+\frac{1}{\ln ^2 N}  \right)(R_1-R_2).
\end{split}
\end{displaymath}
Here we use the fact that the function $ f(\a\tilde{t} +(1-\a)T_0)-f(\tilde{t}) $ is decreasing. 
\end{proof}

\subsection{$ \epsilon $ regularity }\label{sub32}

 Denote $ P_r(x_0,t_0) = \{ (x,t)| d(x, x_0)\le r , |t-t_0|\le r^2  \} $ and $ P_r=P_r(0,T_0) $. The monotonicity allows us to prove the following $ \epsilon $ regularity theorem:

 \begin{thm}\label{eps}
 Let $ \Psi $ be defined as above with $ G=G_{(0,T_0)}(x,t) $ the heat kernel of $ \R^n $. There exists a constant $0< \epsilon_0 < \sqrt{T_0}/2 $  such that for a solution $ J $ in $ B_1(y) \times [0,T] $ with $ E_0 < \infty $, the following is true:
 
   If for some $ R \in (0, \epsilon_0) $ there holds \[ \Psi(R)<\epsilon_0, \] then \[ \sup\limits_{P_{\sigma R} } |\n J|^2 \le c(\sigma R)^{-2}  \] with some constant $ \sigma $ and $ c $ depending only on $  M$ .

 \end{thm}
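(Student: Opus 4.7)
The plan is to follow the blow-up strategy of Struwe \cite{S} and Chen-Struwe \cite{CS} developed for the harmonic map heat flow, adapted to the tensor-valued equation \eqref{E3}. The three ingredients I would combine are the local monotonicity formula of Theorem \ref{mon2}, the Bernstein-type differential inequality \eqref{eJ} for $|\nabla J|^2$, and the Shi-type higher-derivative estimates above. The argument is by contradiction, producing a nontrivial limit on a flat background that, by monotonicity, should have vanishing heat-kernel weighted gradient energy.

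I would first suppose the theorem fails, so there exist sequences of solutions $J^k$, radii $R_k \in (0,\epsilon_0)$ with $\Psi^k(R_k) < \epsilon_0$, and scaling factors $\sigma_k \to 0$ for which $(\sigma_k R_k)^2 \sup_{P_{\sigma_k R_k}} |\nabla J^k|^2 \to \infty$. A parabolic point-picking lemma applied to the weighted gradient $(R_k/2 - d(x,0) - \sqrt{T_0 - t})_+ |\nabla J^k|(x,t)$ on $P_{R_k/2}$ yields base points $(x_k, t_k)$ and microscopic radii $r_k$ with $\lambda_k := |\nabla J^k|(x_k, t_k) \to \infty$, $\lambda_k r_k \to \infty$, and $|\nabla J^k| \le 2\lambda_k$ on $P_{r_k}(x_k, t_k)$. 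Rescale via $\tilde J^k(y,s) := J^k(x_k + \lambda_k^{-1} y,\, t_k + \lambda_k^{-2} s)$ with the pulled-back metric $\tilde g^k(y) := g(x_k + \lambda_k^{-1} y)$. Then $|\tilde\nabla \tilde J^k|(0,0) = 1$, $|\tilde\nabla \tilde J^k| \le 2$ on the expanding cylinder $P_{\lambda_k r_k}$, and $\tilde g^k$ converges smoothly to the Euclidean metric. Applying the Shi-type estimate on unit time intervals to the rescaled flow bounds every $|\tilde\nabla^m \tilde J^k|$ uniformly on compact subsets, so Arzel\`a-Ascoli produces a smooth subsequential limit $\tilde J^\infty$ on $\R^n \times (-\infty, 0]$ solving the flat harmonic heat flow with $|\tilde\nabla \tilde J^\infty|(0,0) = 1$.

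To derive the contradiction I would transfer the smallness of $\Psi^k(R_k)$ all the way down to microscopic scales by iterating Theorem \ref{mon2}. Centering the monotonicity formula at points $(x_k + \lambda_k^{-1}y,\, t_k + \lambda_k^{-2} s)$ and iterating from scale $R_k$ down to scale $\lambda_k^{-1}\rho$ for fixed $\rho > 0$, the left-hand side of \eqref{z2} converts under the parabolic change of variables into the Euclidean heat-kernel weighted gradient integral of $\tilde J^k$ over $P_\rho$. The right-hand side is bounded above by $e^{C}\Psi^k(R_k) + C\bigl(N^{n/2}(E_0 + \sqrt{E_0}) + 1/\ln^2 N\bigr) R_k$. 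Choosing $N = N_k \to \infty$ slowly enough that $N_k^{n/2} R_k \to 0$, the error term vanishes in the limit, so the heat-kernel weighted gradient energy of $\tilde J^\infty$ is bounded by $C\epsilon_0$ on every $P_\rho$. Making $\epsilon_0$ small and combining with the pointwise bound $|\tilde\nabla \tilde J^\infty| \le 2$ forces $\tilde J^\infty$ to be constant, contradicting $|\tilde\nabla \tilde J^\infty|(0,0) = 1$.

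The principal obstacle, absent in the harmonic map setting, is that the error term in \eqref{z2} does \emph{not} disappear even when the base metric is flat; it reflects the global energy $E_0$ and the tensor-valued nature of $J$. One must balance $N \to \infty$ against $N^{n/2} R_k \to 0$, which is feasible only because $R_k \to 0$ provides extra freedom to absorb the $N^{n/2}$-growth. A secondary delicate point is tracking the commutator $\nabla_i k\,(\nabla_j J,\, [\nabla_i, \nabla_j] J)$ that appeared in Lemma \ref{lemma1}; since it is controlled by $|\mathrm{Rm}||\nabla J|$, which is lower order than the principal dissipation, it scales away cleanly under the blow-up but must be carried through each application of the iterative monotonicity estimate.
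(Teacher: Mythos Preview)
Your route is genuinely different from the paper's.  The paper does \emph{not} pass to a blow-up limit or invoke compactness: after the same Schoen-type point-picking (maximising $(r-\rho)^2\sup_{P_\rho}e(J)$) and a single rescaling, it applies Moser's parabolic Harnack inequality to the subsolution $e(\tilde J)+e_0^{-2}$ directly, obtaining $1\le C\int_{P_1}e(\tilde J)\,dx\,dt$.  Scaling back and comparing the recentred Gaussian $G_{(x_0,t_0+2\delta^2)}$ with $G_{(0,T_0)}$ on the slab $T_0-4R^2<t<T_0-R^2$ then bounds the right-hand side by $C\Psi(R)+C\epsilon E_0+CR(E_0+1)$, which is a contradiction for $\epsilon_0,\sigma$ small.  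No Shi-type estimates, no Arzel\`a--Ascoli, no ancient limit are needed.  Your compactness approach is more conceptual but heavier; the paper's is fully quantitative and stays on the original solution.

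There is one real gap in your outline.  The hypothesis gives $\Psi(R_k)<\epsilon_0$ for the functional centred at $(0,T_0)$, but you then ``centre the monotonicity formula at $(x_k+\lambda_k^{-1}y,\,t_k+\lambda_k^{-2}s)$'' and iterate from scale $R_k$.  To start that iteration you need $\Psi_{(x_k,t_k)}(R_k')\lesssim\epsilon_0$ for some comparable $R_k'$, and this requires the Gaussian comparison
\[
G_{(x_k,t_k+2\delta^2)}(x,t)\le C\,G_{(0,T_0)}(x,t)+\epsilon R_k^{-2}
\]
on the slab, obtained by splitting into $|x|\le R_k/\sigma$ and $|x|>R_k/\sigma$ exactly as in the paper's proof.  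Without this step the smallness of $\Psi$ does not transfer to the new centre, and the iteration of Theorem~\ref{mon2} cannot begin.  A minor further remark: since in your contradiction you must take $\epsilon_0\to 0$ along the sequence (so that $R_k\to 0$ and the error $N_k^{n/2}R_k$ can be killed), you in fact get $\Psi^\infty(\rho)=0$ for every $\rho$, which forces $\tilde\nabla\tilde J^\infty\equiv 0$ directly; no separate ``smallness plus boundedness implies constant'' step is required.
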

  
  \begin{proof}

   Recall that if $ J $ is a solution, denoting $ e(J)=\frac{1}{2} |\n J|^2 $, then by \eqref{eJ}  we have  
   \begin{equation}\label{eJ1}
   (\p_t -\D )e(J) \le C(e(J)^2+1 )
   \end{equation}  for some constant $C$. 
  
    For $ r=\sigma R>0 $ with $ \sigma <1 $ to be determined later, let  $ \rho \in [0,r] $ and $ z_0=(x_0,t_0) \in P_\rho $ be such that 
    \begin{equation}
    \begin{split}
    &(r-\rho)^2\sup\limits_{P_\rho}e(J)=\max\limits_{0\le \a \le r} \{ (r-\a)^2\sup\limits_{P_\a}e(J) \}, \\
    & e(J)(z_0)=\sup\limits_{P_\rho } e(J)=e_0.
    \end{split}
    \end{equation}
    Set $ \rho_0=\frac{1}{2}(r-\rho) $. By the choice of $ \rho $ and $ z_0 $, $$ \sup\limits_{P_{\rho_0(x_0,t_0)}} e(J) \le \sup\limits_{P_{\rho_0+\rho}} e(J) \le 4e_0. $$
    Now define $ \tilde{J}(x,t) = J(x_0+\frac{x}{\sqrt{e_0}},t_0+\frac{t}{e_0}) $ in $ P_{r_0} $, where $ r_0=\sqrt{e_0} \cdot \rho_0 $, then $ e(\tilde{J})(0,0)=1 $ and $ \sup\limits_{P_{r_0}} e(\tilde{J}) \le 4$. $ \tilde{J} $ is a solution in $ P_{r_0}  $ with respect to the metric $ \tilde{g}(x)=g(x/\sqrt{e_0}) $.  We have 
    \begin{equation}
    (\p_t -\tilde{\D} )e(\tilde{J}) =\frac{1}{e_0^2} (\p_t -\D )e(J) \le C(e(\tilde{J})+\frac{1}{e_0^2}).
    \end{equation}
     If $ r_0>1 $, Harnack inequality implies 
     \begin{equation}
     1+\frac{1}{e_0^2}=e(\tilde{J})(0,0)+\frac{1}{e_0^2} \le C \int_{P_1} (e(\tilde{J})+\frac{1}{e_0^2})dxdt.
     \end{equation}
 Let $ \delta =1/\sqrt{e_0} \le \rho_0 $ then the scaling back
     \begin{displaymath}
     \int_{P_1} e(\tilde{J})dxdt= \delta ^{-n} \int_{P_\delta(x_0,t_0)} e(J)dxdt 
     \end{displaymath}
 Choose $ \sigma<1 $ so that $ r<R $. Since $ G_{(x_0,t_0+2\delta^2)}(x,t)\ge C\delta^{-n} $ and $ \delta+ \rho \le \rho_0+\rho \le r $,  the monotonicity formular \eqref{z2} implies 
 \begin{equation}
     \begin{split}
     \delta ^{-n} & \int_{P_\delta(x_0,t_0)} e(J)  dxdt \le C \int_{P_\delta(x_0,t_0)} e(J)G_{(x_0,t_0+2\delta^2)}(x,t) \phi^2 \sqrt{|g|}dxdt \\
     & \le C\int_{T_0-4R^2}^{T_0-R^2} \int_{\R^n} e(J)G_{(x_0,t_0+2\delta^2)}(x,t)\phi^2 \sqrt{|g|} dxdt +CR(E_0+1)
     \end{split}
     \end{equation}
 Now on $ T_0-4R^2<t<T_0-R^2 $, for given $ \epsilon >0 $, if $ \sigma $ is small enough: 
     \begin{equation}
         \begin{split}
        G_{(x_0,t_0+2\delta^2)}(x,t) &\le \frac{C}{(4\pi |t-T_0|)^{n/2}}\exp \left(-\frac{|x-x_0|^2}{4(t_0+2\delta^2-t)} \right) \\
         & \le C\exp \left(\frac{|x|^2}{4|t-T_0|}-\frac{|x-x_0|^2}{4|t_0+2\delta^2-t|} \right) G(x,t)      \\
         & \le C \exp \left(c\sigma^2 \frac{|x|^2}{R^2} \right)G(x,t) \\
         & \le \left\{ \begin{array}{ll}
         CG(x,t) & \textrm{if} |x| \le R/\sigma\\
         CR^{-n}\exp (-c\delta^2) & \textrm{if} |x| > R/\sigma
         \end{array} \right. \\
         & \le CG(x,t)+ CR^{-2}\exp \left((2-n)\log R-c\sigma^2 \right) \\
         & \le CG(x,t)+\epsilon R^{-2}
         \end{split}
         \end{equation}
 In the above inequalities we use the fact that $ |t-T|<4R^2 $ and $ (x_0,t_0)\in P_\rho \subset P_r $. With these inequalities we see that 
\[ 1 \le C \int_{P_1} e(\tilde{J})dxdt+\frac{C}{e_0^2} \le C\Psi(R) + C\epsilon E_0 + CR(E_0+1) + Cr^4 \]
and the constant $ C $ is only depending on $ M $. Now if $ \epsilon_0, \epsilon  $ and $ \sigma $ are small enough we obtain a contradiction.
So $ r_0 \le 1 $ and $ \max\limits_{0\le \a \le r} \{ (r-\a)^2\sup\limits_{P_\a}e(J) \} \le 4\rho_0^2 e_0 =4r_0 \le 4 $. Let $ \a=\frac{1}{2}r $ then we obtain the result.

\end{proof}

\subsection{Proof of main theorems}\label{sub33}

We start with two lemmas. 
Denote \[  \bar{e}(t)=\max\limits_M e(J(t))   \]

\begin{lem}\label{lemma2}
There exists a $ \delta >0 $ depending only on $ (M,g) $ such that for any $ t_0 \in [0,T) $, we have  \[  t_0+\delta \arctan \frac{1}{2\bar{e_0}} <T  \] and, if $ t_0 \le t \le t_0+\delta \arctan \frac{1}{2\bar{e_0}}  $, \[ \bar{e}(t) \le 2\bar{e}_0+\frac{1}{\bar{e_0}} \]
where $ \bar{e}_0=\bar{e}(t_0) $.
\end{lem}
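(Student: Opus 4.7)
The plan is to reduce the lemma to an ODE inequality for $\bar{e}(t)$ via the maximum principle, then solve that ODE directly. Writing $e(J) = \tfrac{1}{2}|\nabla J|^2$ and absorbing the lower-order terms on the right-hand side of \eqref{eJ} using $|\nabla J|, |\nabla J|^2 \le 1 + |\nabla J|^4$, one obtains
\[
\p_t e \le \Delta e + C_1(e^2 + 1)
\]
for some $C_1 = C_1(M,g)$. A Hamilton-type maximum principle applied to the scalar $\bar{e}(t) = \max_M e(J(\cdot,t))$ then yields
\[
\frac{d^+}{dt}\bar{e}(t) \le C_1\left(\bar{e}(t)^2 + 1\right)
\]
in the sense of upper Dini derivatives on $[0,T)$.

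Separating variables and integrating from $t_0$ to $t$ gives $\arctan \bar{e}(t) - \arctan \bar{e}_0 \le C_1(t - t_0)$. The peculiar upper bound $B := 2\bar{e}_0 + 1/\bar{e}_0$ appearing in the statement is dictated by the arctangent identity
\[
\arctan B - \arctan \bar{e}_0 = \arctan \frac{1}{2\bar{e}_0},
\]
which follows from $\arctan a - \arctan b = \arctan\tfrac{a-b}{1+ab}$ together with the elementary computation $B - \bar{e}_0 = \bar{e}_0 + 1/\bar{e}_0$ and $1 + B \bar{e}_0 = 2(1+\bar{e}_0^2)$. Setting $\delta = 1/C_1$, for any $t$ with $t - t_0 \le \delta\arctan(1/(2\bar{e}_0))$ one concludes $\arctan \bar{e}(t) \le \arctan B$, hence $\bar{e}(t) \le B$.

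To see that the flow actually exists up to time $t_0 + \delta\arctan(1/(2\bar{e}_0))$, I would run a standard continuity argument. Let $\tau$ be the supremum of $s \in [t_0,T)$ such that $\bar{e}(r) \le B$ for all $r \in [t_0,s]$. The ODE integration together with the arctangent identity shows $\tau \ge t_0 + \delta \arctan(1/(2\bar{e}_0))$ whenever $\tau < T$. The remaining possibility $\tau = T < \infty$ would force $|\nabla J|$ to stay bounded on $[t_0,T)$, contradicting the maximality of $T$ via the Shi-type extension criterion proved in Section 2; and if $T = \infty$ there is nothing more to check. This yields both the time-bound $t_0 + \delta \arctan(1/(2\bar{e}_0)) < T$ and the claimed pointwise bound on $\bar{e}(t)$.

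The only nontrivial step is the arctangent identity, which is what fixes the precise form of the constants in the statement; everything else is a routine application of the parabolic maximum principle together with the previously established extension criterion.
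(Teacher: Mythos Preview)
Your argument is correct and essentially identical to the paper's. The paper obtains the same Dini-derivative inequality $D^{+}\bar e \le C(\bar e^{2}+1)$, compares with the explicit ODE solution $g(t)=\dfrac{\bar e_0+\tan C(t-t_0)}{1-\bar e_0\tan C(t-t_0)}$ (which is just $\tan(\arctan\bar e_0+C(t-t_0))$, i.e.\ your arctangent integration rewritten via the addition formula), and observes that at $t=t_0+\delta\arctan(1/(2\bar e_0))$ one has $g(t)=2\bar e_0+1/\bar e_0$. Your continuity/extension argument invoking the Shi-type criterion is more explicit than the paper's, which leaves that step implicit; otherwise the two proofs coincide.
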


\begin{proof}

 For any  $ x\in M $ with $ e(J)(x,t)=\bar{e}(t) $, we have $ \D e(J)(x,t) \le 0 $. So by \eqref{eJ1}, for all such $ x $ we have 
  \[ \p_t e(J)|_{(x,t)} \le C(\bar{e}(t)^2+1).  \]  
This implies \[ D^+ \bar{e}(t) \le  C(\bar{e}(t)^2+1) ,\] where \[  D^+f(t) = \limsup\limits_{h\to 0^+} \dfrac{f(t+h)-f(t)}{h}. \]
Let $ \delta=1/C $ and
 \[ g(t)= \dfrac{\bar{e}_0 +\tan C(t-t_0)}{1-\bar{e}_0 \tan C(t-t_0)},\quad  t\in [t_0, t_0+\delta \arctan \frac{1}{\bar{e}_0}  )  \] so that $ g(t_0)=\bar{e}_0 $ and  $ g'(t)=C(g(t)^2+1) $. By the comparison theorem of ODE, we have, for $ t_0 \le t \le  t_0+\delta \arctan \frac{1}{\bar{e}_0} $,  \[ \bar{e}(t) \le g(t). \] 
Note that on $  t_0 \le t \le t_0+\delta \arctan \frac{1}{2\bar{e_0}} $, \[ g(t) \le 2\bar{e}_0+\frac{1}{\bar{e_0}},  \]
the lemma follows. 

\end{proof}

\begin{cor}
Let $ (M,g,J_0) $ be an almost Hermitian structure such that $ |\n J_0| \le K $. Then the solution of \eqref{E3} with initial $ J_0 $ exists at least on    $ [0,\delta \arctan \frac{1}{2K^2}] $ .
\end{cor}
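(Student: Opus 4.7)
The plan is to derive the corollary as an immediate application of Lemma 3.2 (with $t_0=0$), using the hypothesis $|\nabla J_0|\le K$ to control the initial maximum energy density $\bar e_0 := \max_M e(J(0))$. Since $e(J)=\tfrac12|\nabla J|^2$, the hypothesis gives
\[
\bar e_0 \le \tfrac{1}{2}K^2,
\]
and hence $\tfrac{1}{2\bar e_0}\ge \tfrac{1}{K^2}\ge \tfrac{1}{2K^2}$. By monotonicity of $\arctan$ on $[0,\infty)$, this upgrades to
\[
\delta\arctan\tfrac{1}{2\bar e_0} \;\ge\; \delta\arctan\tfrac{1}{2K^2}.
\]

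Next I would invoke Theorem 2.1 to produce a smooth solution of \eqref{E3} on a maximal interval $[0,T)$ with $T>0$, and then apply Lemma 3.2 at $t_0=0$ to conclude that $T>\delta\arctan\tfrac{1}{2\bar e_0}$. Chaining this with the inequality above yields
\[
T \;>\; \delta\arctan\tfrac{1}{2\bar e_0} \;\ge\; \delta\arctan\tfrac{1}{2K^2},
\]
which is exactly the claim that the solution exists on the closed interval $[0,\delta\arctan\tfrac{1}{2K^2}]$.

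There is essentially no obstacle beyond what is already packaged into Lemma 3.2: the underlying differential inequality $(\partial_t-\Delta)e(J)\le C(e(J)^2+1)$ at a spatial maximum, combined with the ODE comparison $g'=C(g^2+1)$, controls $\bar e(t)$ on a definite time interval whose length depends only on $\bar e_0$, and the Shi-type higher-derivative bounds of Proposition 2.2 then rule out blow-up of any $\nabla^m J$ before that time, so the flow can be extended. The only step worth noting is the elementary passage from the sharper quantity $\delta\arctan\tfrac{1}{2\bar e_0}$ produced by the lemma to the cleaner dimensional quantity $\delta\arctan\tfrac{1}{2K^2}$ appearing in the corollary, which is exactly the monotonicity-of-$\arctan$ estimate above.
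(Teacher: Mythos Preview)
Your proposal is correct and is exactly the intended argument: the paper states the corollary without proof, as an immediate consequence of the preceding lemma (labeled \ref{lemma2}, which is Lemma~3.3 in the paper's numbering, not 3.2), applied at $t_0=0$ together with $\bar e_0\le \tfrac12 K^2$ and the monotonicity of $\arctan$. Aside from the minor label mismatch, there is nothing to add.
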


\begin{lem}\label{lemma3}
Let $ J $ be a harmonic almost complex structure on $ (M,g) $, i.e.
\[  \D J -J \n_p J \n_p J=0.  \]
There is a constant $ \epsilon _1 $, depending on $ (M,g) $, such that if \[ E(J) \le \epsilon_1 \] 
then $ J $ is K\"ahler.
\end{lem}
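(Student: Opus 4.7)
\noindent\emph{Proof proposal.} The statement is a gap theorem: a harmonic $J$ cannot have arbitrarily small positive energy. My plan is to argue by contradiction, combining three ingredients: (i) uniform $C^\infty$-bounds for low-energy harmonic structures via the $\epsilon$-regularity of Theorem \ref{eps} applied in the stationary case; (ii) a compactness step producing a K\"ahler limit; and (iii) a linearization/rigidity argument near that limit.

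I would start by establishing the uniform a priori bounds. A harmonic $J$ is a time-independent solution of \eqref{E3}, so the local density $\Psi(R)$ in \eqref{Zt} simplifies after integrating the Euclidean heat kernel in $t$: for each $y\in M$ it is comparable to a Morrey-type quantity $R^{2-n}\int_{B_R(y)}|\nabla J|^2$, hence bounded by $CR^{2-n}E(J)$. Fixing a small scale $R_0$ depending only on the injectivity radius and curvature of $(M,g)$, the hypothesis $E(J)<\epsilon_1$ with $\epsilon_1<\epsilon_0 R_0^{n-2}/C$ forces $\Psi(R_0)<\epsilon_0$ at every $y\in M$, so Theorem \ref{eps} produces $\|\nabla J\|_{L^\infty(M)}\le M_0$ uniformly. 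Standard elliptic bootstrap on the harmonic equation $\Delta J=J\nabla_pJ\nabla_pJ$ then upgrades this to uniform $C^k$-bounds for every $k$. Now if the lemma failed, one would obtain a sequence $\{J_m\}$ of non-K\"ahler harmonic ACS with $E(J_m)\to 0$; these uniform bounds and Arzel\`a--Ascoli extract a subsequence $J_m\to J_\infty$ in $C^\infty$, and since $E(J_\infty)=0$ the limit is parallel, i.e.\ K\"ahler.

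To derive a contradiction I parametrize $J_m=J_\infty\exp(S_m)$ with $S_m\to 0$ in $C^\infty$, where $S_m$ is a section of the bundle $V\subset\mathrm{End}(TM)$ of $g$-skew endomorphisms anti-commuting with $J_\infty$. Because $\nabla J_\infty=0$, expanding the harmonic operator around $J_\infty$ gives $LS_m=Q(S_m,\nabla S_m)$, with $L$ the self-adjoint elliptic linearization---essentially the rough Laplacian on $V$ plus a bounded zero-order curvature term---and $Q$ at least quadratic. The kernel $\ker L$ is finite-dimensional and, by integration by parts, consists of parallel sections of $V$; each such parallel endomorphism integrates to a one-parameter family of K\"ahler deformations of $J_\infty$ by a Gronwall-type propagation of parallelism. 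Projecting $S_m$ off $\ker L$, elliptic estimates give a quadratic bootstrap $\|S_m^\perp\|_{H^2}\le C\|S_m\|_{H^2}^2$, which for $m$ large forces $S_m^\perp=0$; then $J_m=J_\infty\exp(S_m^0)$ is itself K\"ahler, contradicting the choice of the sequence. The main obstacle lies precisely in this rigidity step: one must verify that every parallel section of $V$ integrates to a genuine family of K\"ahler almost complex structures rather than merely an infinitesimally harmonic deformation. This reduces to a Weitzenb\"ock/holonomy statement on $(M,g,J_\infty)$; it is automatic in the generic case when the K\"ahler locus is isolated (so $\ker L=0$), but requires additional care in situations with many parallel $V$-sections (e.g.\ hyperk\"ahler-type geometries), where one uses the fact that the $V$-exponential of a parallel endomorphism preserves parallelism of $J$.
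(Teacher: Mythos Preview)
Your overall architecture---contradiction, $\epsilon$-regularity for uniform $C^\infty$ bounds on a low-energy sequence, compactness to a K\"ahler limit $J_\infty$---matches the paper exactly. The divergence is entirely in step (iii), and there the paper is far more direct. Since $\nabla J_\infty=0$, the harmonic equation for $J_k$ reads $\Delta(J_k-J_\infty)=J_k\,\nabla_p(J_k-J_\infty)\,\nabla_p(J_k-J_\infty)$; pairing with $J_k-J_\infty$ and integrating by parts gives
\[
\int_M|\nabla(J_k-J_\infty)|^2\,dv\ \le\ C\,\|J_k-J_\infty\|_{L^\infty}\int_M|\nabla(J_k-J_\infty)|^2\,dv,
\]
so once $\|J_k-J_\infty\|_{L^\infty}<1/C$ one gets $\nabla J_k\equiv 0$ outright. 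The point is that the nonlinearity $J\nabla_pJ\nabla_pJ$ is exactly quadratic in $\nabla J$, and the K\"ahler limit makes $\nabla J_k=\nabla(J_k-J_\infty)$; no linearization, no kernel analysis, no exponential chart is needed.

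Your Lyapunov--Schmidt route is heavier and, as written, has a gap: the estimate $\|S_m^\perp\|_{H^2}\le C\|S_m\|_{H^2}^2$ does not by itself force $S_m^\perp=0$---it only says $S_m^\perp$ is quadratically small relative to $S_m$. To close the argument you would need a genuine uniqueness step: for each parallel $S^0\in\ker L$ the pair $(S^0,0)$ already solves the full nonlinear equation (because $J_\infty\exp(S^0)$ is K\"ahler, hence harmonic), and then contraction-mapping uniqueness of the $S^\perp$-component for a given $S^0$-component yields $S_m^\perp=0$. This is fixable, but the paper's one-line integration by parts sidesteps the whole detour and the holonomy discussion you flag as the ``main obstacle.''
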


\begin{proof}
Suppose the lemma is not true. Then we may assume there is a sequence of non-K\"ahler  harmonic almost complex structure $ J_k $ on $ (M,g) $ such that \[ E(J_k) \to 0. \]
The $ \epsilon $-regularity implies that  $ | \n J_k | $ (hence $ |\n^m J_k| $ for any $ m $) is bounded for large $ k $. So we may find a subsequence, still denoted by $ J_k $ for convenience,  that converges to a harmonic almost complex structure $ J_0 $ with \[ E(J_0)=0 . \]
So that $ \n J_0=0 $  and 
\begin{equation}\label{k1}
\D(J_k-J_0) = J_k \n_p (J_k-J_0  )\n_p (J_k-J_0  ).
\end{equation} 
By \eqref{k1} we have 
\begin{displaymath}
\begin{split}
\int_M \left|  \n (J_k -J_0)  \right|^2 dv & = -\int_M \left(J_k -J_0, \D (J_k -J_0) \right) dv \\
& =-\int_M \left(   J_k -J_0,  J_k \n_p (J_k-J_0  )\n_p (J_k-J_0  )  \right)dv \\
& \le C \sup |J_k -J_0 | \int_M \left|  \n (J_k -J_0)  \right|^2 dv\\
&<\int_M \left|  \n (J_k -J_0)  \right|^2 dv,
\end{split}
\end{displaymath}
for $k$ sufficiently large and hence we have, for such $k$, 
\begin{equation}
\int_M \left|  \n (J_k -J_0)  \right|^2 dv=0.
\end{equation}  
 Hence $ \n J_k = \n J_0 =0 $. This is a contradiction since $ J_k $ is non-K\"ahler.

\end{proof}

Now we can prove Theorem \ref{main1} and Theorem \ref{main2}.

\begin{proof}[Proof of Theorem \ref{main1}]

We only need to show $ \sup \{ \bar{e}(t)| t\in [0,T) \} <+\infty $. If so, by lemma \ref{lemma2} we have $ T=+\infty $. And by Shi-type estimate $ |\n^m J| $ are bounded for all $ m $, so that the flow converges to a $ J_\infty $. Since 
\[ \dfrac{d E(J)}{dt}= -\int_M | \D J -J \n_p J \n_p J|^2 dv, \]
we have 
\[ 0\le  E(J_0)-E(J_\infty) =\int_{0}^{\infty} \int_M | \D J -J \n_p J \n_p J|^2 dv  \le \epsilon .\]
This implies that 
\[ \liminf\limits_{t\to \infty} | \D J -J \n_p J \n_p J|^2 =0 , \]
i.e. 
\[ \D J_\infty -J_\infty \n_p J_\infty \n_p J_\infty =0 . \]
Moreover, we have $ E(J_\infty) \le E(J_0) \le \epsilon $. Then lemma \ref{lemma3} shows that   $ (M,g,J_\infty) $ is  K\"ahler if  $ \epsilon $ is small enough.\\

Now we assume  that $ \sup \{ \bar{e}(t)| t\in [0,T) \} =+\infty $ and prove by contradiction. Then there is a sequence $ t_i \to T $ such that \[ \bar{e}(t_i) \to +\infty, \]
  \[ \lambda_i^2 \equiv \arctan \frac{1}{2 \bar{e}(t_i) } \to 0. \]
Let $ p_i \in M $ be such that \[ e(J)(p_i,t_i)= \bar{e}(t_i)  \] and $ \{x_\alpha \} $ be the normal coordinates centre at $ p_i$. In this coordinate system we may define $ Z(t) $ as in \eqref{Zt}, with $ T_0=t_i+\delta \lambda_i ^2  $, note that by lemma \ref{lemma2}  $ T_0<T $. 
On $Q_i \equiv B_{\lambda_i^{-1}}  \times [-\lambda_i^{-2} t_i,\delta] $ we define \[ \tilde{J}(x,t)=J(\lambda_i x, t_i+\lambda_i ^2 t ). \] 
Then $ \tilde{J} $ satisfies the equation 
\begin{equation}
\p_t \tilde{J} =\widetilde{\D} \tilde{J} -\tilde{J} \widetilde{\n}_p \tilde{J} \widetilde{\n}_p \tilde{J}
\end{equation}
with  $\widetilde{\D}  $ and $\widetilde{\n}$ respect to the metric $ \tilde{g}_{\alpha\beta}(x)=g_{\alpha\beta}(\lambda_i x) $. If $ i $ is large enough, we see that 
\[  e(\tilde{J}) (0,0) = \lambda_i^2 \bar{e}(t_i) > \frac{1}{4}\]
and, by lemma 3.3, 
\[    e(\tilde{J}) (x,t) \le \lambda_i^2 \bar{e}(t_i+\lambda_i^2 t) \le \lambda_i^2 \left(2\bar{e}(t_i)+\frac{1}{\bar{e}(t_i)} \right) <2 \]
since $ \lambda_i^2 \bar{e}(t_i) \to \frac{1}{2} $.
From \eqref{eJ} we have 
\[  (\p_t -\tilde{\D} )e(\tilde{J}) =\lambda_i^4 (\p_t -\D )e(J) \le C(e(\tilde{J}) +\lambda_i^4)  \]
i.e. $ \p_t h\le \tilde{\D} h   $ for $ h \equiv (e(\tilde{J})+ \lambda_i^4)\exp (-Ct) $.
Let $ Q=B_1 \times \left(  -\min\{ \frac{\delta}{2},\frac{\delta}{C} \} , \frac{\delta}{2}  \right) $ be a subset of $ Q_i $. By Moser's Harnack inequality (see \cite{Moser}, Theorem 3), there is a constant $ \gamma >0$ such that 
\[  \frac{1}{4}  < h(0,0) \le \gamma \left(  \dfrac{2}{\delta \text{Vol}(B_1)} \int_Q h^2dxdt \right)^{\frac{1}{2}} .\] 
Since $ e(\tilde{J})<2  $ and $ \exp (-Ct)  \le \exp (\delta)$ in $ Q $, we have 
\[  1 \le 16 \gamma^2 \left(    \dfrac{2(2+\lambda_i^4)\exp(2\delta)}{\delta \text{Vol}(B_1)} \int_Q (e(\tilde{J})+\lambda_i^4  )dxdt   \right) . \]
Moreover, we may assume $ \sqrt{\tilde{g}} > \frac{1}{2} $, $ \lambda_i <1  $ and $   16\gamma^2 \cdot {2(2+\lambda_i^4)\exp(2\delta)} \cdot \lambda_i^4  < \frac{1}{2}  $ for sufficiently large $ i $, so that 
\begin{equation}\label{moser}
1 \le \gamma_1 \int_Q | \widetilde{\n} \tilde{J} |^2 d\tilde{v}dt
\end{equation}
with constant $ \gamma_1 = \dfrac{96\gamma^2\exp (2\delta)}{\delta \text{Vol}(B_1)} $.

Consider the function \[  Z(t)=(T_0-t) \int_{\R^n} |\n J|^2 G \phi^2 \sqrt{|g|} dx  \]
on $ T_0-\rho < t < T_0 $, where $ \rho = \min \{ 1,T_0 \} $. By the monotonicity formula \eqref{z1}, we have 
\begin{equation}\label{z3}
 Z(t)\le e^{-Cf(T_0-\rho)}Z(T_0-\rho)+C\left( N^{n/2} (E_0+\sqrt{E_0})+\frac{1}{\ln ^2 N} \right)\rho
\end{equation}
for any $ N>1 $. Since 
\[ Z(T_0-\rho) \le C \rho^{1-n/2} \int_{B_1} | \n J |^2(x,T_0-\rho) dv \le C\rho^{1-n/2} E(J_0) \le C\epsilon \rho^{1-n/2}, \]
and $ -f(T_0-\rho) $ is bounded with $ \rho \le 1 $, \eqref{z3} implies that 
\[  Z(t) \le CN^{n/2} \rho^{1-n/2} \sqrt{\epsilon} + \frac{C}{\ln ^2 N}, \]
where the constant  $ C $ does \textbf{not} depend on $ N $.\\ 

Now on $ -\min \{ \frac{\delta}{2}, \frac{\delta}{C} \}<t< \frac{\delta}{2} $, we have 
\[  \frac{1}{2} \delta \lambda_i^2 < T_0-(t_i+\lambda_i^2 t)=\lambda_i^2 (\delta -t) <  \frac{3}{2} \delta \lambda_i^2 . \]
So that  \[ G_{(0,T_0)} (x,t_i+\lambda_i^2 t) = \dfrac{\exp (- |x|^2 / 4( T_0-t_i-\lambda_i^2 t) )}{\left(  4\pi( T_0-t_i-\lambda_i^2 t )  \right)^{n/2}} \ge C\delta^{-n/2}\lambda^{-n} \exp(-\frac{1}{2\delta}) \] provided $ |x|\le \lambda_i $.
Then we can estimate 
\begin{equation}\label{z4}
\begin{split}
\int_{B_1} | \widetilde{\n} \tilde{J} |^2 d \tilde{v} &= \lambda_i^{2-n} \int_{B_{\lambda_i}} | \n J |^2 (x,t_i+\lambda_i^2 t)dv\\
& \le C\delta^{\frac{n-2}{2}}\exp( \frac{1}{2\delta} )Z(t_i+\lambda_i^2 t)\\
& \le  CN^{n/2} \rho^{1-n/2} \sqrt{\epsilon} + \frac{C}{\ln ^2 N}
\end{split}
\end{equation}
By \eqref{moser} and \eqref{z4}, there is a constant $ C $ depending only on $ (M,g) $ such that  
\[  1\le  CN^{n/2} \rho^{1-n/2} \sqrt{\epsilon} + \frac{C}{\ln ^2 N}   .  \]
Choose $ N=\exp (\sqrt{2C}) $, so that we have 
\begin{equation}\label{rho}
 \rho^{n/2-1} \le 2C \exp ( n\sqrt{2C}/2 )  \sqrt{\epsilon} \equiv C_1 \sqrt{\epsilon} .
\end{equation}  
So if $ \epsilon $ is small enough,  there must have $ \rho=T_0 $. And since $ T_0 \to T  $ as $ i \to \infty $, \eqref{rho} implies that
\begin{equation}\label{T}
T^\frac{n-2}{2} \le C_1 \sqrt{\epsilon} ,
\end{equation} 
which contradicts to Collary 3.1 provided $ \epsilon=\epsilon(K) $ small enough.

\end{proof}

\begin{proof}[Proof of Theorem \ref{main2}]
Now suppose $ J $ with energy $ E(J)<\epsilon $ is in a homotopy class $ [J_0] $ that no K\"ahler structure exists. According to the proof above, we must have $ \sup \{\bar{e}(t)| t\in [0,T]) \} =+\infty $. Otherwise the flow will exist for all time and converge by subsequence to a K\"ahler structure. Since the convergence is in smooth topology, the limit K\"ahler structure will be in the same homotopy class, which is a contradiction. Hence \eqref{T} holds and  it implies that $T \to 0 $ as $ \epsilon \to 0 $.
\end{proof}

  Now we construct an example such that  the infimum of the energy functional restricted in the homotopy class of an almost complex structure is zero, but there is no compatible K\"ahlerian complex structure in the homotopy class. By Donaldson \cite{Don} (Corollary 6.5), on the K3 surface $ (M, g) $ there exists a homotopy class of almost complex structures that contains no complex structures. 
Donaldson's result on $K3$ surface has been generalized greatly in \cite{FVO}. 
  Inspired by these examples,  we consider a flat four-tori $T^4=S^1\times S^1\times S^1\times S^1$ with the flat metric $g_0$. Let $J_0$ be a standard complex structure on $T^4$. 
  Since the twistor bundle of a flat tori is $T^4\times SO(4)/U(2)=T^4\times S^2$, hence
  an almost complex structure on $T^4$ which is compatible with $g_0$ can be thought as a map from $T^4$ to $S^2$
  while $J_0$ corresponds to a constant map from $T^4$ to $S^2$.
Take a small ball $B_{r_0}(p)\subset T^4$ for some $r_0>0$, we can construct a smooth  almost complex structure $J$ such that $J$ agrees with $J_0$ outside $B_{r_0}$, but the homotopy class $[J]$ differs $[J_0]$ by a nonzero element in $H^4(T^4, \pi_4(SO(4)/U(2)))=\pi_4(S^2)=\mathbb{Z}_2$, see S. Donaldson \cite{Don} for such a construction on $K3$ surface. 
We can think $J_0$ is a constant map from $T^4$ to $S^2$. Since $J$ agrees with $J_0$ outside $B_{r_0}$, when restricted in $B_{r_0}$, we can think $J$ as a map from $B_{r_0}$ to $S^2$ with the boundary $\p B_{r_0}$ contracted to a point, hence $J$ (restricted on $B_{r_0}$) can be viewed as a map from $S^4$ to $S^2$. The homotopy class of $J$ then corresponds to the nonzero element in $\pi_4(S^2)=\mathbb{Z}_2$. 
 
 Given any such a smooth almost complex structure $J$, we construct an almost complex structure $J_r$ for $r\in (0, r_0]$ such that $J_r=J_0$ outside the ball $B_r(p)$ and
 \[
 J_{r}(x)=J(xr_0/r), x\in B_r(p). 
 \]
 Clearly $J_r$ is in the homotopy class of $[J]$. We compute the energy of $J_r$ by
 \[
 E(J_r)=\int_{B_r} |\nabla J_r|^2 dx=\int_{B_{r}} |\nabla (J(xr_0/r))|^2dx=r_0^{-2} r^2\int_{B_{r_0}} |\nabla J(y)|^2 dy
 \]
 Hence $E(J_r)$ goes to zero when $r\rightarrow 0$. On $(T^4, g_0)$, the harmonic heat flow with an initial almost complex structure $J_r$, for $r$ sufficiently small, must blow up at finite time by Theorem \ref{main2}. Otherwise, if the flow exists for all time, then by $\epsilon$ regularity we know that $J(t)$ converges smoothly to $J_\infty$, which is compatible with $g_0$ and defines a K\"ahler structure, and $J_\infty$ lies in the homotopy class $[J]$. But $\nabla J_\infty=0$ implies that the homotopy class of $J_\infty$ corresponds to a constant map from $T^4$ to $S^2$. This is a contradiction.

 Similar examples can be constructed for Donaldson's $K3$ example and examples considered in \cite{FVO} with necessary modifications. For simplicity we omit the details.

%There are interesting work recently to study various geometric evolution equations on Hermitian manifolds, symplectic manifolds, and in mostly general case, on almost Hermitian manifolds, we refer to \cite{LW, G, TW, ST1, ST2, TWY, He16} and reference therein. 


\begin{thebibliography}{0}
\bibitem{BorSalvai}G. Bor, Hern‡ndez-Lamoneda, L. Salvai, \emph{Orthogonal almost-complex structures of minimal energy.} Geom. Dedicata 127 (2007), 75-85. 


\bibitem{CG}E. Calabi, H. Gluck, \emph{What are the best almost complex structures on the 6-sphere?}  
Proc. Sympos. Pure Math., 54, Part 2,  99-106, Amer. Math. Soc., Providence, RI, 1993. 

\bibitem{CD}Yunmei Chen, Weiyue Ding, \emph{Blow-up and global existence for heat flows of harmonic map.} Invent. math.99, 567-578 (1990)
\bibitem{CS}Yunmei Chen, Michael Struwe, \emph{Existence and partial regularity results for the heat flow for harmonic maps.} Math.Z.201, 83-103 (1989)

\bibitem{Davidov}J. Davidov, \emph{Harmonic almost Hermitian structures}, arxiv.org/abs/1605.06804.
\bibitem{Don}S. K. Donaldson, \emph{ Polynomial invariants for smooth four-manifolds.  } Topology. Vol.29, No.3, 257-315 (1990)
\bibitem{ES}J. Eells, J. H. Sampson, \emph{Harmonic mapping Riemannian manifolds.} Amer. J. Math.86 109-160 (1964)
\bibitem{FVO}F. Connolly, H. V. Le, K. Ono, 
\emph{Almost complex structures which are compatible with K\"ahler or symplectic structures}, Ann. Global Anal. Geom. 15 (1997), no. 4, 325-334.
%\bibitem{G}Gill, M. \emph{Convergence of the parabolic complex Monge-Amp\`ere equation on compact Hermitian manifolds,}
Comm. Anal. Geom. 19 (2011), no. 2, 277-303.
\bibitem{Hamilton82}R. Hamilton, \emph{Monotonicity formulas for parabolic flows on manifolds.} Comm. Anal. Geom. 1, 127-137 (1993)
\bibitem{Hamilton821}R. Hamilton, \emph{A matrix Harnack estimate for the heat equation.} Comm. Anal. Geom. 1, 113-126 (1993)

%\bibitem{He16}Weiyong He, \emph{Evolution equations for non-degenerate 2-forms}, arxiv.org:1510.08798.
\bibitem{He17} Weiyong He, \emph{Energy minimizing almost complex structures}, preprint, 2017. 

\bibitem{H} G. Huisken,  \emph{Asymptotic behavior for singularities of the mean curvature flow.} J. Differential Geom. 31 (1990), no. 1, 285-299.

%\bibitem{LW}H.V. Le; G. F. Wang, \emph{Anti-complexified Ricci flow on compact symplectic manifolds.} J. Reine Angew. Math. 530 (2001), 17-31.

\bibitem{Moser}Moser, J.:A \emph{Hanack inequality for parabolic differential equations.} Comm. Pure Appl. Math.17, 101-134 (1964)
%\bibitem{ST1}J. Streets, G. Tian, \emph{Hermitian curvature flow.} J. Eur. Math. Soc. (JEMS) 13, no. 3, 601-634 (2011)
%\bibitem{ST2}J. Streets, G. Tian, \emph{Symplectic curvature flow}, J. Reine Angew. Math. 696 (2014), 143-185.
\bibitem{S} Michael Struwe, \emph{On the evolution of harmonic maps in higher dimensions.} J.Differ. Geom.28, 485-502 (1988)

%\bibitem{TW} Tosatti, V.; Weinkove, B. \emph{The Chern-Ricci flow on complex surfaces}, Compos. Math. 149 (2013), no. 12,
%2101-2138.
%\bibitem{TWY}Tosatti, V.; Weinkove, B.; Yang, X. \emph{Collapsing of the Chern-Ricci flow on elliptic surfaces}, Math. Ann. 362
%(2015), no. 3-4, 1223-1271.
\bibitem{Wood1}C. Wood, \emph{Harmonic almost-complex structure.} Compositio Mathematica. 99, 2, 183-212 (1995)
\bibitem{Wood2}C. Wood, \emph{A class of harmonic almost-product structures.} Journal of Geometry and Physics. 14, 25-42 (1994)
\end{thebibliography}
\end{document}